\newcommand{\C}{\mathbb{C}}
\newcommand{\N}{\mathbb{N}}
\newcommand{\R}{\mathbb{R}}
\newcommand{\Z}{\mathbb{Z}}
\newcommand{\T}{\mathbb{T}}
\newcommand{\re}{\mathop{\mathrm{Re}}}
\newcommand{\im}{\mathop{\mathrm{Im}}}
\newcommand{\supp}{\operatorname{supp}}
\newcommand{\dist}{\operatorname{dist}}
\newcommand{\sgn}{\operatorname{sgn}}
\newtheorem{thm}{Theorem}[section]
\newtheorem{defn}[thm]{Definition}
\newtheorem{prop}[thm]{Proposition}
\newtheorem{lemma}[thm]{Lemma}
\numberwithin{equation}{section}
\newcommand\CI{{\mathcal I}}
\newcommand\CJ{{\mathcal J}}
\newcommand\CL{{\mathcal L}}
\newcommand{\wpsi}{{\widetilde \psi}}
\newcommand{\ol}[1]{\overline{#1}}
\newcommand{\pq}{(\tfrac1p, \tfrac1q)}
\newcommand{\ppq}{(1/p,1/q)}
\newcommand{\Be}{\begin{equation}}
\newcommand{\Ee}{\end{equation}}
\newcommand{\pj}{\mathcal P_{\!\mu}}
\begin{document}

\author[Jeong]{Eunhee Jeong}
\address[Jeong]{School of Mathematics, Korea Institute for Advanced Study, Seoul 02455, Republic of Korea} 
\email{eunhee@kias.re.kr}

\author[Lee]{Sanghyuk Lee}
\author[Ryu]{Jaehyeon Ryu}
\address[Lee, Ryu]{Department of Mathematical Sciences and RIM, Seoul National University, Seoul 08826, Republic of  Korea}
\email{shklee@snu.ac.kr}
\email{miro21670@snu.ac.kr}

\keywords{Twisted Laplacian, Spectral projection}
\subjclass[2010]{42B99  (primary);  42C10 (secondary)}
\title[$L^p$--$L^q$ spectral projection estimate for the twisted Laplacian]{Sharp $L^p$--$L^q$ estimate for the spectral projection 
 associated  with  the twisted Laplacian}

\begin{abstract} In this note we are concerned with  estimates for the spectral projection operator $\pj$ associated with the twisted Laplacian $L$. 
 We  completely characterize the optimal bounds on the operator norm of $\pj$ from $L^p$ to $L^q$ when $1\le p\le 2\le q\le \infty$.  As an application, we obtain uniform resolvent estimate for $L$. 
\end{abstract}

\maketitle

\section{Introduction}

We consider  the twisted Laplacian  $L$  on $\C^d\cong \R^{2d}$, $d\ge1$, which is defined by 
\[ L = - \sum_{j=1}^d \Big(\big(\frac\partial{\partial x_j}-\frac12iy_j\big)^2 +\big(\frac\partial{\partial{y_j}}+\frac12 ix_j\big)^2\Big).\]
It is well known that $L$ has a discrete spectrum which  consists of the points $2k+d$, $k\in \N_0  :=\N\cup\{0\}.$ For any multi-index $\alpha, \beta\in \N_0^d,$ the special Hermite function $\Phi_{\alpha, \beta}$ is given  by
\[ \Phi_{\alpha, \beta} (z) =(2\pi )^{-d/2} \int_{\R^d}e^{ix\cdot \xi} \Phi_\alpha(\xi+\frac12y)\Phi_\beta(\xi-\frac12y) d\xi, \quad z=x+iy,\]
which are the Fourier-Wigner transform of the Hermite functions $\Phi_\alpha$ and $\Phi_\beta$ on $\R^d.$ It is easy to see that $\{\Phi_{\alpha, \beta}: \alpha, \beta\in \N_0^d\}$ forms a complete orthonormal system in $L^2(\C^d)$. Also,  $L\Phi_{\alpha,\beta} =(2|\beta|+d)\Phi_{\alpha,\beta}$, which means $\Phi_{\alpha,\beta}$ is an eigenfunction of $L$ with the eigenvalue $2|\beta|+d$, hence the eigenspace of $L$ is infinite dimensional.  Here $|\beta|=\beta_1+\cdots+\beta_d.$ A simple calculation shows that $\Phi_{\alpha,\beta} $ is also an eigenfunction of the Hermite operator $-\Delta_z +\frac14|z|^2$ with the eigenvalue $|\alpha|+|\beta|+d.$ So, the functions $\Phi_{\alpha,\beta}$ are called the special Hermite functions. For more about the twisted Laplacian and the special Hermite functions, we refer the reader to the monograph  by Thangavelu \cite{Th93}.

The spectral projection operator  $\pj$ onto  the eigenspace of $L$ associated  with the eigenvalue ${\mu}=2k+d \in 2\N_0+d$ is given by 
\Be\label{basic}
\pj f= \sum_{\alpha\in \N^d_0}\sum_{\beta : 2|\beta|+d={\mu}} \langle f,\Phi_{\alpha, \beta} \rangle \,\Phi_{\alpha, \beta}, \quad f\in \mathcal S(\R^{2d}).
\Ee
Thus, it follows $f=\sum_{\mu\in 2\N_0+d} \pj f$. 
It is known \cite{Th93} that $\pj$ is also expressed by the twisted convolution: 
\begin{equation}\label{pro_tw} 
\pj f= (2\pi)^{-d} f\times \varsigma_k,  \quad {\mu}=2k+d,
\end{equation}
where  $\varsigma_k(z)= L^{d-1}_k (\frac12|z|^2)e^{-\frac14|z|^2}$ and $L^{\alpha}_k(t) = \sum_{j=0}^k {k+\alpha \choose k-j }\frac{(-t)^j}{j!}$  is the Laguerre polynomial of type $\alpha$. Here, the twisted convolution $f\times g$ is  defined by
\[ f\times g(z) =\int_{\C^d} f(z-w)g(w)e^{i\frac12\im z\cdot \overline w} dw\] 
where  $z\cdot w = z_1w_1+\cdots +z_dw_d$ for any $z,w\in \C^d$.

The estimates for $\pj$ have been of interest  related to  $L^p$ convergence of  the Bochner-Riesz means $S_R^\alpha(L)$ associated with the special Hermite expansion which is given by $S_R^\alpha(L) f:=\sum_{\mu\le R} (1- \mu/R)\pj f$ (see, for example,  \cite{Th93}). 
In particular,  $L^2$--$L^q$ estimate for $\pj$ (equivalently, $L^{q'}$--$L^2$ estimate for $\pj$) was 
studied by Thangavelu \cite{Th91, Th93, Th98},   Ratnakumar, Rawat, and Thangavelu \cite{RRT97},  Stempak and  Zienkiewicz \cite{SZ}, 
and   Koch and Ricci \cite{KR}. The sharp $L^2$--$L^q$ bound for $\pj$ is now well-understood. More precisely,  for $2\le q\le \infty$
\begin{equation}\label{known}
 \|\pj\|_{2\to q}\lesssim {\mu}^{\varrho(q)}
 \end{equation}
holds with the exponent $\varrho(q)$ given by
\[
\varrho(q) =\begin{cases}
	-\frac12(\frac12-\frac1q) &\text{if } 2\le q\le \frac{2(2d+1)}{2d-1},\\
	\frac{d-1}2-\frac dq &\text{if }\frac{2(2d+1)}{2d-1}\le q\le \infty,
	\end{cases}
\]
and the estimate \eqref{known} is optimal in that the exponent $\varrho(q)$ cannot be taken to be a smaller one. Here, $\|T\|_{p\to q}$ denotes the usual operator norm from $L^p$ to $L^q$ of a linear operator $T$ defined by 
\[\|T\|_{p\to q} =\sup_{f\in \mathcal S, f\neq 0 } \|Tf\|_q/\|f\|_p.\]
The estimate \eqref{known} was shown by Thangavelu \cite{Th91, Th93} and, subsequently, Ratnakumar, Rawat, and Thangavelu \cite{RRT97}  for $q\ge q_\circ$ with some $q_\circ>2(2d+1)/(2d-1)$. Afterward, Stempak and Zienkiewicz \cite{SZ} proved \eqref{known} for all $q\ge2$ except $q=2(2d+1)/(2d-1)$. The remaining  end point case $q=2(2d+1)/(2d-1)$ was settled  by Koch and Ricci \cite{KR}; moreover,  they showed  the estimate \eqref{known} is sharp.   A local version of the endpoint estimate was obtained earlier by Thangavelu \cite{Th98}.

The purpose of this paper is to establish the optimal $L^p$--$L^q$ estimate for $\pj$ when $1\le p\le 2\le q\le \infty$. Our result  was inspired by the recent work \cite{JLR} of the authors regarding $L^p$--$L^q$ estimates for the Hermite spectral projection $\Pi_{\mu}$ which is the orthogonal projection onto the eigenspace of the Hermite operator $H=|x|^2-\Delta$ associated with the eigenvalue $\mu$. In \cite{JLR}, a systematic study concerning the bound on $\|\Pi_\mu\|_{p\to q}$ was carried out. The main ingredients were a representation formula for $\Pi_{\mu}$ and a modification of $TT^*$ argument.  In particular,   the representation formula   was obtained by making  use of the Schr\"odinger propagator $e^{itH}$ and the fact that the eigenvalues of $H$ are in $2\N_0+d$. See \cite[Section 2]{JLR}. 
It turns out that the similar approach works even more efficiently for the projection operator $\pj$ and 
we obtain a complete characterization of the  $L^p$--$L^q$ bound on $\|\pj\|_{p\to q}$ in terms of $\mu$.

In Theorem \ref{main} below, we show that the boundedness property of the spectral projection $\pj$ is similar to  that of  the
operator 
\[
{\mathlarger \wp}_k f(x,y)= \frac1{(2\pi)^{2d}}\int_{k-1\le |\xi|^2<k}e^{i(x,y)\cdot\xi}\,\widehat f(\xi)\, d\xi, \quad (x,y)\in \mathbb R^d\times  \mathbb R^d, 
\]
which is the spectral projection operator  associated with the Laplacian  $-\Delta$ in $\R^{2d}$. For a discussion regarding the sharp $L^p$--$L^q$ bounds for the operator ${\mathlarger \wp}_k$, we refer to \cite[Section 3.3]{JLR}. Compared with the Hermite spectral projection $\Pi_\mu$,  the sharp exponent  $\varrho(p,q)$ exhibits less involved behavior and we do not have to appeal to the heavy machinery used in \cite{JLR}. Consequently,  we obtain the sharp estimates much easily.

Before  stating our  result, we need to introduce some notations. Let $\mathfrak A,$ $ \mathfrak B$,  $\mathfrak C$, $\mathfrak  D$,  $\mathfrak  F\in [1/2,1]\times[0,1/2]$ be the points  defined   by
\begin{align*}
    &\mathfrak A=\left( \dfrac{2d+3}{2(2d+1)}, \  \dfrac12\right), 
    \  \  \mathfrak B=\left(\dfrac{(2d)^2+8d-1}{4d(2d+1)},\dfrac{2d-1}{4d}\right),
   \ \ \mathfrak C=\left(1,\frac{2d-1}{4d}\right),
   \end{align*}
   \begin{align*}   
   &\mathfrak D=\left(\frac{d+1}{2d},\frac{1}{2}\right),\  \ 
\quad \mathfrak F=\left(\frac{(2d)^2+4d-4}{4d(2d-1)},\frac{d-1}{2d-1}\right).
\end{align*}
For a point $(x,y)\in  [1/2,1]\times[0,1/2]$, set $(x,y)'=(1-y, 1-x)$ and, similarly, 
for a set $S\subset [1/2,1]\times[0,1/2]$  we set $S'=\{(x,y)':(x,y)\in  S\}$. Then, we define the set $\mathcal R_1$, $\mathcal R_2$, and  $\mathcal R_3\subset [1/2,1]\times[0,1/2]$ as follows.

\begin{defn} 
Let $\mathcal R_1$ denote the closed pentagon with vertices $(\frac12, \frac12),  \mathfrak A,  \mathfrak B,  \mathfrak B', \mathfrak A'$ from which  
two points $\mathfrak B$ and $\mathfrak B'$ are removed.  Let  $\mathcal R_2$ be the closed trapezoid with vertices $\mathfrak A , (1,\frac12), $ $\mathfrak C,$ $ \mathfrak B$ from which the closed line segment  $[\mathfrak B, \mathfrak C]$ is removed, and
$\mathcal R_3$ denote  the closed pentagon with vertices  $\mathfrak B, \mathfrak C, (1,0), \mathfrak C',$ and $\mathfrak B'$ from which the closed line segments $[\mathfrak B, \mathfrak C]$ and 
$[\mathfrak B', \mathfrak C']$ are removed.
 (See Figure \ref{fig1}).
\end{defn}

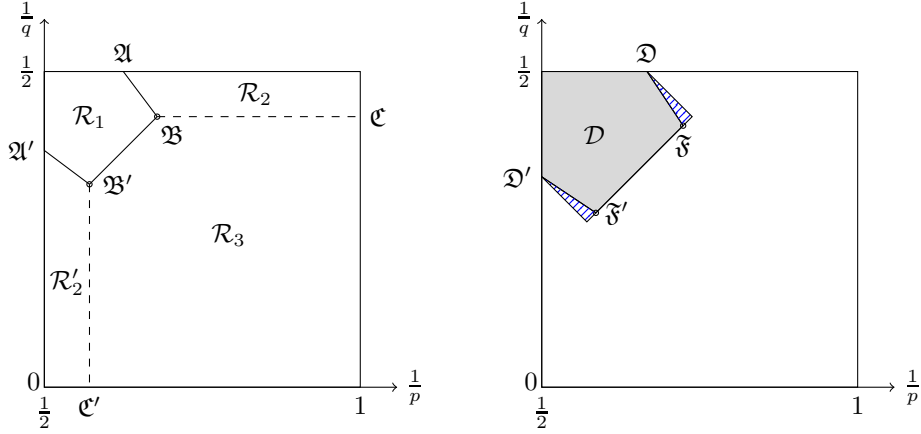
\begin{figure}[t]
\centering
\begin{tikzpicture}[scale=0.7]
\draw[<->] (0,7) node[left]{$\frac1q$}--(0,0)--(6.7,0)node[right]{$\frac1p$};
\draw (0,0) rectangle (6,6);
\draw (0,6)--(0,4.5)--(6/7,27/7) --(15/7,36/7)--(1.5,6);
\draw[dashed] (6/7,27/7)--(6/7,0); \draw[dashed] (15/7,36/7)--(6,36/7); 
\node[left] at (0,4.5) {$\mathfrak A'$}; \node[above] at (1.5,6) {$ \mathfrak A$}; 
\node[below] at (6/7,0) {$\mathfrak C'$}; \node[right] at (6,36/7) {$\mathfrak C$}; 
\node[] at (6/7,36/7) {$\mathcal R_1$};
\node[] at (3/7,14/7) {$\mathcal R_2'$}; \node[] at (28/7,39/7) {$\mathcal R_2$};
\node[above] at (3.5,2.5) {$\mathcal R_3$};
\draw (6/7,27/7) circle [radius=0.05]; \node[right] at (6.5/7,27/7) {$\mathfrak B'$};
\draw (15/7,36/7) circle [radius=0.05]; \node[below] at (17/7,36/7) {$\mathfrak B$}; 
\node[left] at (0.1,0.1) {$0$};\node[below] at (0,0) {$\frac12$};
\node[left] at (0,6) {$\frac12$}; 
\node[below] at (6,0) {$1$};
\end{tikzpicture}
\quad \quad 
\begin{tikzpicture}[scale=0.7]
\draw[<->] (0,7) node[left]{$\frac1q$}--(0,0)--(6.7,0)node[right]{$\frac1p$};
\draw[fill=gray! 30]  (0,6)--(0,4)--(36/35,116/35)--(94/35,174/35)--(2,6);
\draw (0,0) rectangle (6,6);
\node[] at (1,4.8) {$\mathcal D$};
\draw (0,4)--(36/35,116/35)--(94/35,174/35)--(2,6); 
\draw (36/35,116/35) circle [radius=0.05]; \node[right] at (36/35,116/35) {$\mathfrak F'$}; 
\draw (94/35,174/35) circle [radius=0.05]; \node[below] at(94/35,174/35)  {$\mathfrak F$}; 
\node[left] at (0,4) {$\mathfrak D'$}; \node[above] at (2,6) {$ \mathfrak D$}; 
\draw[pattern=north east lines, pattern color=blue] (0,4) -- (6/7,22/7)--(36/35,116/35);
\draw[pattern=north east lines, pattern color=blue] (2,6) -- (20/7,36/7)--(94/35,174/35);
\node[left] at (0.1,0.1) {$0$};\node[below] at (0,0) {$\frac12$};
\node[left] at (0,6) {$\frac12$}; 
\node[below] at (6,0) {$1$}; 
\end{tikzpicture}
\caption{The points $\mathfrak A,$ $\mathfrak B $, $\mathfrak C$, $\mathfrak D$, $\mathfrak F$, and the regions $\mathcal R_1$, $\mathcal R_2,$ $\mathcal R_3$, and $\mathcal D$. }
\label{fig1}
\end{figure}

For $(p,q)\in [1,2]\times [2,\infty]$,  we define the exponent  $\varrho(p,q)$ by setting 
\Be
\label{def-beta} 
\varrho(p,q)=
         \begin{cases} 
                 \  -\frac12(\frac1p-\frac1q),    & \ \big(\frac1p,\frac1q\big)\in\mathcal R_1,  
                                              \\  
                 \ d\big(\frac1p+\frac1q\big) -\frac{2d+1}{2}, & \ \big(\frac1p,\frac1q\big)\in\mathcal R_2, 
                    \\
               \     \frac{2d-1}{2}- d\big(\frac1p+\frac1q\big),& \ \big(\frac1p,\frac1q\big)\in\mathcal R_2',
                   \\
               \    d(\frac1p-\frac1q) -1,           & \ \big(\frac1p,\frac1q\big)\in \ol{\mathcal R}_3 .
\end{cases} 
\Ee

We are now  ready to state our main result.

\begin{thm}\label{main}  Let $d\ge 1$ and $1\le p\le 2\le q\le \infty$. We have the estimate
 \begin{align}
\label{est-main}    
\| \pj\|_{p\to q}\lesssim    {\mu}^{\varrho(p,q)}
\end{align}
if and only if $(1/p,1/q)\not\in [\mathfrak B,\mathfrak  C]\cup[\mathfrak B', \mathfrak  C']$. The bounds are  sharp in that  the exponents $\varrho(p,q)$ cannot be improved. Additionally,  we have
\begin{enumerate} 
[leftmargin=0.8cm, labelsep=0.3 cm, topsep=-6pt]
\item[$(i)\,$]
 If $(1/p,1/q)\in (\mathfrak B, \mathfrak C]$, we have 
$  \| \pj\|_{L^p\to L^{q,\infty}}\lesssim {\mu}^{\varrho(p,q)} $.
\item [$\,\,(ii)$]   If $(1/p,1/q)=\mathfrak B$ or $\mathfrak B'$, we  have  $
    \| \pj\|_{ L^{p,1}\to L^{q,\infty}} \lesssim   {\mu}^{\varrho(p,q)}$.  
    \end{enumerate}
\end{thm}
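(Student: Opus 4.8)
The plan is to follow the strategy of \cite{JLR}: realize $\pj$ as an oscillatory average of the Schr\"odinger propagator $e^{-itL}$, use the explicit Mehler-type kernel to rewrite $\pj$ as a twisted convolution with a completely explicit kernel, and transfer the sharp $L^p$--$L^q$ bounds for the Euclidean spectral projection $\proj$ on $\R^{2d}$ recorded in \cite[\S3.3]{JLR}. Since $\operatorname{spec}(L)\subset 2\N_0+d$ we have $e^{-i(t+\pi)L}=e^{-i\pi d}e^{-itL}$, so the orthogonality relation $\tfrac1\pi\int_{-\pi/2}^{\pi/2}e^{i(\mu-\mu')t}\,dt=\delta_{\mu\mu'}$ on $2\N_0+d$ gives
\[
\pj f=\frac1\pi\int_{-\pi/2}^{\pi/2}e^{i\mu t}\,e^{-itL}f\,dt .
\]
Inserting the Mehler formula $e^{-itL}f=f\times\mathcal K_t$, $\mathcal K_t(z)=(4\pi i\sin t)^{-d}\exp(\tfrac i4|z|^2\cot t)$ for $0<|t|<\tfrac\pi2$, turns this into $\pj f=f\times\varsigma_\mu$ with $\varsigma_\mu(z)=\tfrac1\pi\int_{-\pi/2}^{\pi/2}e^{i\mu t}\mathcal K_t(z)\,dt$, a constant multiple of the Laguerre kernel $\varsigma_k$ in \eqref{pro_tw}.

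Next I would analyze $\varsigma_\mu$ by stationary phase in $t$: decompose $\varsigma_\mu=\sum_{j\ge0}\varsigma_\mu^j$ according to $|t|\sim 2^{-j}$ with $2^{-j}\gtrsim\mu^{-1}$, plus a core piece supported in $|t|\lesssim\mu^{-1}$. The $t$-phase $\mu t+\tfrac14|z|^2\cot t$ has a nondegenerate critical point inside $\{|t|\sim 2^{-j}\}$ exactly when $|z|\sim 2^{-j}\sqrt\mu$, and it degenerates in Airy fashion at the ``sphere'' $|z|\sim\sqrt\mu$; this exhibits each $\varsigma^j_\mu$, up to rapidly decaying tails, as a thin oscillatory annulus at radius $2^{-j}\sqrt\mu$, and shows that $\varsigma_\mu$ has exactly the size profile of the kernel of $\proj$ (a radius-$\sqrt\mu$, thickness-$\mu^{-1/2}$ spherical shell in $\R^{2d}$), with rapid decay for $|z|\gg\sqrt\mu$. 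At this stage one records the elementary endpoint bounds $\|\pj\|_{2\to2}=1$, $\|\pj\|_{1\to\infty}=(2\pi)^{-d}\|\varsigma_k\|_\infty\sim\mu^{d-1}$ (from $|L^{d-1}_k(x)e^{-x/2}|\le L^{d-1}_k(0)$), $\|\pj\|_{2\to\infty}\lesssim\|\varsigma_k\|_2\sim\mu^{(d-1)/2}$, and, by weak Young's inequality with the profile just obtained, $\|\pj\|_{L^1\to L^{q_*,\infty}}\lesssim\mu^{\varrho(\mathfrak C)}$ at $\mathfrak C$ with $q_*=\tfrac{4d}{2d-1}$.

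For the sufficiency half, note that $\pj^*=\pj$ forces $\varrho(p,q)=\varrho(q',p')$, so it suffices to treat $\mathcal R_1\cup\mathcal R_2\cup\overline{\mathcal R}_3$. The exponent $\varrho$ is affine on each of $\mathcal R_1,\mathcal R_2,\mathcal R_2',\overline{\mathcal R}_3$, and each of these is the convex hull of the points among $\{(\tfrac12,\tfrac12),\mathfrak A,\mathfrak A',(1,\tfrac12),(\tfrac12,0),(1,0),\mathfrak B,\mathfrak B',\mathfrak C,\mathfrak C'\}$ that it contains, their union being the whole square $[\tfrac12,1]\times[0,\tfrac12]$. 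At every one of these points except $\mathfrak B,\mathfrak B'$ the sharp bound is already in hand: trivially at $(\tfrac12,\tfrac12)$ and $(1,0)$, by the (dual) Stein--Tomas bound \eqref{known} at $\mathfrak A,\mathfrak A'$, by the $L^2\!\to\!L^\infty$ case of \eqref{known} at $(1,\tfrac12)$ and $(\tfrac12,0)$, and by the weak Young bound above at $\mathfrak C,\mathfrak C'$. Granting the remaining restricted weak-type estimate $\|\pj\|_{L^{p,1}\to L^{q,\infty}}\lesssim\mu^{\varrho(\mathfrak B)}$ at $\mathfrak B$ (and, by symmetry, $\mathfrak B'$), Riesz--Thorin and Marcinkiewicz/Stein--Weiss interpolation then produce \eqref{est-main} with strong type everywhere except on the seams $[\mathfrak B,\mathfrak C]$ and $[\mathfrak B',\mathfrak C']$; there the only available interpolation joins the two weak-type endpoints at the ends of the seam, which share the same $q=q_*$, so only $L^p\!\to\!L^{q,\infty}$ propagates (giving (i)--(ii)), while interpolation transversal to the seam yields strong type only with a strictly larger exponent, since $\varrho$ is continuous but not smooth across the seam. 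Necessity is then established by testing \eqref{est-main} against suitable extremizers — a single special Hermite function and a bump (or the kernel $\varsigma_\mu$ itself) focused at scale $\mu^{-1/2}$, a Knapp example modelled on a spherical cap of $\{|\xi|\sim\sqrt\mu\}$, and $\chi_{B(0,\sqrt\mu)}$ — which pin down $\varrho(p,q)$ on each region; on $[\mathfrak B,\mathfrak C]$ the relevant test function additionally produces a $(\log\mu)$-loss, matching the logarithmic divergence of $\|\varsigma_\mu\|_{L^{q_*}}$, so strong type genuinely fails there.

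The main obstacle is the restricted weak-type estimate at the triple point $\mathfrak B$ (and $\mathfrak B'$), where $\mathcal R_1$, $\mathcal R_2$, $\mathcal R_3$ meet: there the crude bound $|f\times\varsigma_\mu|\le|f|*|\varsigma_\mu|$ is off by a factor $\mu^{1/2}$, so one must use the cancellation of the twisted convolution together with the oscillation of $\varsigma_\mu$. I expect to handle this by bounding each $f\mapsto f\times\varsigma^j_\mu$ — a twisted convolution with a thin oscillatory annulus, whose analysis parallels that of the corresponding annular piece of $\proj$ once the twisted modulation $e^{\frac i2\im z\cdot\bar w}$ is absorbed by a local change of frame — and then summing over $j$ by almost-orthogonality, thereby transferring the sharp (restricted weak-type) $L^p$--$L^q$ bound for $\proj$ from \cite[\S3.3]{JLR} to $\pj$ without loss. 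The remaining pieces — the elementary endpoints, the interpolation bookkeeping, and the lower-bound constructions — are routine.
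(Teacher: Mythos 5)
Your high-level strategy---the representation formula $\pj f=\tfrac1\pi\int_{-\pi/2}^{\pi/2}e^{i\mu t}e^{-itL}f\,dt$, a dyadic decomposition in $t$, endpoint estimates and interpolation---is indeed the route the paper takes. But there are two places where the proposal as written has genuine gaps, and one place where your proposed technique would not actually deliver the estimate.

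First, your dyadic decomposition $|t|\sim 2^{-j}$ only isolates the singularity of the kernel $(\sin t)^{-d}$ at $t=0$. It does not treat the degeneracy of the phase $\phi(t)=t+\tfrac14|z-z'|^2\cot t+\tfrac12\im(z\cdot\bar z')$ at $t=\pm\pi/2$, where $\phi''(t)=2\cos t\,|z-z'|^2(\sin t)^{-3}$ vanishes. For $|z-z'|$ near $2$ the stationary point of $\phi$ sits near $\pi/2$, and a single van der Corput bound on the $j=0$ piece does not give the needed $\mu^{-1/2}$ decay. The paper introduces a second family of cutoffs $\varphi_k(t)=\psi^0(t)\wpsi(2^k(t-\pi/2))$ precisely for this reason, and proves $|\CJ_k|\lesssim\mu^{-1/2}2^{k/2}$ using the lower bound $|\phi''|\gtrsim 2^{-k}$ on $\supp\varphi_k$; the loss $2^{k/2}$ is then recovered by the short support in $t$ and the $TT^*$/summation machinery. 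Without this piece of the decomposition your proof is incomplete.

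Second, and more seriously, the argument you sketch at $\mathfrak B$ does not go through. You propose to ``absorb the twisted modulation $e^{\frac i2\im z\cdot\bar w}$ by a local change of frame'' and transfer the restricted weak-type bound for the Euclidean $\proj$ from \cite[\S3.3]{JLR} to $\pj$, summing in $j$ by ``almost-orthogonality.'' The modulation is a genuinely bilinear phase: it is approximately constant only when $|z|\,|w|\lesssim 1$, i.e.\ on balls of radius $O(\mu^{-1/2})$ when the kernel mass is at radius $|w|\sim\sqrt\mu$. That is the regime of the transplantation theorem (\cite[Lemma 3.5]{JLR}, H\"ormander \cite{H68}), which, after the rescaling $(z,z')\to(n^{-1/2}z,n^{-1/2}z')$ and $n\to\infty$, transfers \emph{necessary} conditions from $\pj$ to $\proj$; it cannot be run in reverse to obtain the uniform \emph{upper} bound. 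Nor does Cotlar--Stein almost-orthogonality interpolate $L^p\to L^q$ ($p\ne q'$) endpoints; what is actually needed is Bourgain's summation trick (Lemma \ref{s-trick}), applied to the dyadic bounds \eqref{TT*}--\eqref{TT*0}, which come from a modified $TT^*$ argument (\cite[Lemma 2.3]{JLR}) working directly with $e^{-itL}$ rather than with $\proj$. That $TT^*$ step, together with the $\varphi_k$ decomposition, is the analytic core of the proof; the proposal as written leaves this unproved.

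Two smaller remarks. The weak-Young bound you want at $\mathfrak C$, namely $\|\varsigma_k\|_{L^{q_*,\infty}}\lesssim\mu^{(2d-3)/4}$ with $q_*=\frac{4d}{2d-1}$, is correct in the bulk $|z|\lesssim\sqrt\mu$, but the distribution function of $\varsigma_k$ has a competing contribution from the Airy turning-point region $|z|$ near $2\sqrt\mu$ where the asymptotic \eqref{pj-asym} loses control; this needs a separate estimate. (The paper avoids this altogether: the weak and restricted-weak type estimates on $[\mathfrak B,\mathfrak C]$ are produced from Lemma \ref{local} and interpolation with \eqref{TT*}--\eqref{TT*0}, not from Young's inequality.) Finally, for the lower bound in $\mathcal R_2$, the test functions you list (a single eigenfunction, a Knapp cap, $\chi_{B(0,\sqrt\mu)}$) do not obviously produce the exponent $\frac{2d-1}2-d(\frac1p+\frac1q)$; the paper uses a resonant test function $f=\sum_j\chi_{D_j}(|z|)\varsigma_k(z)$ built from the sign pattern of the Laguerre asymptotics of $\varsigma_k$, and separately derives \eqref{R1}, \eqref{R3} by a duality/contradiction argument from the Koch--Ricci lower bound \eqref{known_TT*}. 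You should check that your test functions actually achieve the claimed exponent on $\mathcal R_2'$ before asserting sharpness.
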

Here $\|\pj\|_{L^{p,r}\to L^{q,s}}$ means the operator norm of $\pj$ from the Lorentz space $L^{p,r}$ to $L^{q,s}.$

The twisted Laplacian is related to the Heisenberg sub-Laplacian \cite{KR,Th93,Th06}. The reduced Heisenberg group $h_d$ is   the set $\R^d\times \R^d\times \T$ with group law
\[ (x,y,e^{it}) (x',y,'e^{it'})=(x+x',y+y', e^{i(t+t'+\frac12(x'\cdot y -x\cdot y'))}),\] 
and the sub-Laplacian $\mathcal L$ on $h_d$ is defined by
\[ \mathcal L=-\sum_{j=1}^d \Big(\big(\frac\partial{\partial x_j}-\frac{1}2y_j \frac{\partial}{\partial t} \,\big)^2 +\big(\frac\partial{\partial{y_j}}+\frac{1}2x_j\dfrac{\partial}{\partial t} \,\big)^2\Big).\]
 The estimate \eqref{est-main} can be used to generate the spectral projection estimates for  the differential operators acting on a special class of functions on $h_d$.  Especially, 
on the class of functions of the form $g(x,y,t)=e^{imt} f(x,y)$, $m\in\Z$, we have $\mathcal L (e^{imt}f) =e^{imt} L_m f$, where 
\[ L_m =  - \sum_{j=1}^d \Big(\big(\frac\partial{\partial x_j}-\frac m2iy_j\big)^2 +\big(\frac\partial{\partial{y_j}}+\frac m2 ix_j\big)^2\Big).\]
By scaling, it is easy to see that, for each nonzero $m\in\Z$, the numbers $(2k+d)|m|$, $k\in \N_0$, are eigenvalues of $L_m$ with  the corresponding eigenfunctions 
\[ 
\Phi_{\alpha,\beta}^m(x,y)=|m|^{d/2} \Phi_{\alpha,\beta}(|m|^{1/2}x,{\sgn(m) } |m|^{1/2}y),
\]
which form an orthonormal basis of $L^2(\R^{2d}).$ So, the pairs $(|m|(2k+d), m)$, $m\in\Z\setminus\{0\}, k\in\N_0$, give the discrete joint spectrum of $\mathcal L$ and $-i\partial_t$. Let $\mathcal P_{m,k}$ be the projection onto the joint eigenspace corresponding to the eigenvalue $(|m|(2k+d), m)$ (see \cite{Th93, KR} for further details). Then the spectral projection estimate \eqref{est-main}  yields
\[ \| \mathcal P_{m,k} u\|_{L^q(h_d)}\lesssim  (2k+d)^{\varrho(p,q)}|m|^{d(\frac1p-\frac1q)}\|u\|_{L^p(h_d)}.\]

We now consider  the estimate for the resolvent of $L$ which takes the form
\begin{equation}\label{resol_z}
\|(L-z)^{-1}\|_{p\to q}\le C_z,\quad z\in \C\setminus (2\N_0+d), 
\end{equation}
where $(L-z)^{-1}$ is defined by
\begin{equation}\label{resol_de}
 (L-z)^{-1}f=\sum_{\mu} ({\mu}-z)^{-1} \pj f.
 \end{equation}
 Estimates for  resolvents  have  a wide range of applications. 
 In particular, uniform resolvent estimates for partial differential operators which holds with $C_z$ independent of the spectral parameter have been studied related to Carleman estimate and strong/weak unique continuation properties (for example, see \cite{KRS87, ev01, KT09, JKL16, JLR} 
 and references therein). 
For the closely related Hermite operator $H$, it was shown in \cite{ev01, KT09,JLR} that 
\begin{equation}\label{hermite_r}
 \|(H-z)^{-1}\|_{p\to q}\le C
 \end{equation}
under the  spectral gap condition 
\begin{equation}\label{gap}
\dist(z, 2\N_0+d)\ge c
\end{equation}
for some $c>0.$ Escauriaza and Vega proved  \eqref{hermite_r} for $\frac{2d}{d+2}\le p\le 2\le q\le \frac{2d}{d-2}$ and showed  strong unique continuation property for the differential inequality $|\partial_t u +\Delta u|\le |Vu|$ with $V\in L^\infty_t L^{d/2}_x$. Also, in \cite{JLR}, the authors extend the range of $p,q$ for \eqref{hermite_r} to an interval on $1/p-1/q=2/d$ and obtained the strong unique continuation property with $V\in L^\infty_tL^{d/2,\infty}_x.$ 

As an application of the spectral projection estimate we obtain the following uniform resolvent estimate for the twisted Laplacian.

\begin{thm}\label{resol} 
Let $d\ge 2$. Suppose that  $(1/p,1/q)$ is in the closed pentagon $\mathcal D$ with vertices $(1/2,1/2), \mathfrak D, \mathfrak F, \mathfrak F',\mathfrak D'$ from which $\mathfrak F$ and $\mathfrak F'$ are removed (the gray region in Figure \ref{fig1}). Then there is a constant $C>0$ such that  
\Be
\label{l-resolvent}
\|(L-z)^{-1}\|_{p\to q}\le C  
\Ee
 provided that  $ z\in\C$ satisfies  \eqref{gap} for some $c>0$. Furthermore, if $\pq=\mathfrak F$ or $ \mathfrak F'$, we have   the restricted weak type estimate  $\|(L-z)^{-1} f\|_{q,\infty}\le C \|f\|_{p,1} $ provided that \eqref{gap} holds for some $c>0$.
\end{thm}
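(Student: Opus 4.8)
The plan is to derive the uniform resolvent estimate from the spectral projection estimate \eqref{est-main} by summing the pieces $({\mu}-z)^{-1}\pj$ over ${\mu}\in 2\N_0+d$. First I would decompose the resolvent \eqref{resol_de} dyadically in ${\mu}$ relative to $|\re z|$. For ${\mu}$ away from $|\re z|$ (say ${\mu}\le |\re z|/2$ or ${\mu}\ge 2|\re z|$, and also all ${\mu}$ when $|\re z|\lesssim 1$), one has $|{\mu}-z|\gtrsim \max({\mu},|z|)$, so these contributions are controlled by $\sum_{{\mu}}\max({\mu},|z|)^{-1}{\mu}^{\varrho(p,q)}$; since on $\mathcal D$ one has $\frac1p-\frac1q\le \frac2{2d}=\frac1d$ except along the edge $[\mathfrak D,\mathfrak F]\cup[\mathfrak D',\mathfrak F']$ where $\varrho(p,q)=d(\frac1p-\frac1q)-1=0$, the exponent $\varrho(p,q)\le 0$ on all of $\mathcal D$ and is $<0$ in the interior and on the segments meeting $(1/2,1/2)$, so the tail sums converge (with the borderline edges requiring the Lorentz refinement). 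For the main range $|\re z|\sim{\mu}$, write $|\re z|\sim 2^\nu$; the key point is that by the spectral gap condition \eqref{gap} there are $O(1)$ eigenvalues ${\mu}$ with $|{\mu}-\re z|\le c$, and for the others $|{\mu}-z|\gtrsim |{\mu}-\re z|$, so the near-diagonal sum is $\lesssim 2^{\nu\varrho(p,q)}\sum_{1\le j\lesssim 2^\nu} j^{-1}\lesssim 2^{\nu\varrho(p,q)}\log(2+2^\nu)$.

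The logarithm is the one thing that has to be beaten, and this is where the shape of $\mathcal D$ enters: on the interior of $\mathcal D$ we have $\varrho(p,q)<0$ strictly, so $2^{\nu\varrho(p,q)}\log(2+2^\nu)$ is summable in $\nu$ and in fact bounded; the subtlety is only on $\partial\mathcal D$. On the two slanted edges $(\mathfrak D,\mathfrak F)$ and $(\mathfrak D',\mathfrak F')$ (open, so excluding $\mathfrak F,\mathfrak F'$), $\varrho(p,q)=0$ identically, and there the naive bound gives $\log$; to remove it I would interpolate. Concretely, the exponent $\varrho(p,q)$ is strictly negative just off the edge inside $\mathcal D$, so fix $(1/p,1/q)$ on the open edge, pick two nearby points of $\mathcal D$ straddling it along a transversal segment with strictly negative $\varrho$, use the already-established strong-type bounds there together with analytic interpolation for the analytic family $z\mapsto (L-z)^{-1}$ (the resolvent is analytic in $z$ on $\C\setminus(2\N_0+d)$), and conclude the strong-type bound with $\varrho=0$ on the open edge. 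For the two corner points $\mathfrak F$ and $\mathfrak F'$ themselves, one cannot interpolate to a strong-type bound, and only the restricted weak-type statement is claimed: here I would use that $(1/p,1/q)=\mathfrak F$ corresponds via the projection estimate to the borderline case where Theorem \ref{main} gives $L^{p,1}\to L^{q,\infty}$ on $\pq\in\{\mathfrak B,\mathfrak B'\}$-type endpoints — more precisely, I would chase how the edge $[\mathfrak D,\mathfrak F]$ of $\mathcal D$ sits relative to the region $\mathcal R_2\cup\overline{\mathcal R}_3$ governing $\varrho$ in Theorem \ref{main}, and apply the restricted-weak-type projection bound $(i)$--$(ii)$ there, then sum using the real interpolation space structure of $L^{p,1}$ and $L^{q,\infty}$ (the Lorentz norms interact well with the $\ell^1$-type sum over ${\mu}$ once each block is restricted weak type).

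A second point requiring care is the boundary of $\mathcal D$ consisting of the vertical/horizontal parts through $\mathfrak D,\mathfrak D'$ and through $(1/2,1/2)$: on the segment from $(1/2,1/2)$ to $\mathfrak D$ (where $1/q=1/2$) and its dual, $\varrho(p,q)<0$ strictly except at the non-endpoint, so there is no issue beyond the tail-sum convergence already addressed; I would simply check that $(1/p,1/q)\in\mathcal D$ implies $(1/p,1/q)\notin[\mathfrak B,\mathfrak C]\cup[\mathfrak B',\mathfrak C']$ so that Theorem \ref{main} is applicable in strong form on the relevant sub-range, and that where $\mathcal D$ does meet those forbidden segments one lands exactly on $\mathfrak F,\mathfrak F'$, consistent with only claiming restricted weak type there. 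The main obstacle, then, is not any single hard estimate but the bookkeeping: organizing the dyadic sum so that the logarithmic loss appears only on the two open edges where $\varrho=0$, and then removing it by interpolating the analytic family $(L-z)^{-1}$ between strictly-subcritical exponents — together with correctly matching the two exceptional corners $\mathfrak F,\mathfrak F'$ to the restricted-weak-type endpoints of Theorem \ref{main}. I expect the interpolation step, and verifying uniformity in $z$ of the constants it produces, to be the part that needs the most attention.
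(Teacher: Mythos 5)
There is a genuine gap at the heart of the argument: your treatment of the near-diagonal sum. After isolating the eigenvalues with $\mu\sim|\re z|\sim 2^\nu$ you correctly identify a factor $2^{\nu\varrho(p,q)}\log(2+2^\nu)$, but your two proposed ways of removing the logarithm do not work. First, the set where $\varrho=0$ inside $\ol{\mathcal D}$ is not the pair of edges $(\mathfrak D,\mathfrak F)$, $(\mathfrak D',\mathfrak F')$ (on those open edges one checks $\varrho<0$); it is the edge $[\mathfrak F,\mathfrak F']$, which lies on the critical line $\frac1p-\frac1q=\frac1d$, together with the vertices $(1/2,1/2)$, $\mathfrak D$, $\mathfrak D'$. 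Second, the interpolation you propose — taking two points of $\mathcal D$ ``straddling'' a point with $\varrho=0$ and interpolating the family $z\mapsto(L-z)^{-1}$ — is impossible precisely there: on the far side of the line $\frac1p-\frac1q=\frac1d$ (and of the line $\frac1p+\frac1q=\frac{2d+1}{2d}$ through $\mathfrak D$) one has $\varrho>0$ and the uniform resolvent bound is false, as the paper notes via Theorem \ref{fractional}; so one of your two straddling points carries no uniform estimate, and interpolating between two points on the good side only reproduces points on the good side. Moreover, analyticity of $(L-z)^{-1}$ in $z$ is irrelevant here, since $z$ is not the interpolation parameter. For the same reason your tail estimate fails on $[\mathfrak F,\mathfrak F']$: with $\varrho=0$ the sum $\sum_{\mu\gtrsim|z|}\mu^{-1}\|\pj\|_{p\to q}$ diverges, and Lemma \ref{s-trick} cannot rescue a sum whose terms have no geometric decay in either direction. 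Finally, the restricted weak type at $\mathfrak F,\mathfrak F'$ has nothing to do with the endpoints $\mathfrak B,\mathfrak B'$ of Theorem \ref{main}; note $\mathfrak F\notin[\mathfrak B,\mathfrak C]$.

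What is missing is a cancellation mechanism, and this is exactly what the paper supplies. The resolvent is split as $\mathcal I+\mathcal E$ with a smooth cutoff $\zeta(\frac{k-n}{n})$ around the nearest eigenvalue $2n+d$ to $\re z$. The tail $\mathcal E$ is written as $m_n(L)\circ L^{-1}$ and handled by the Marcinkiewicz multiplier theorem together with the sharp $L^p$--$L^q$ boundedness of $L^{-1}$ (Theorem \ref{fractional}), which is exactly what makes the critical line $\frac1p-\frac1q=\frac1d$ accessible. The near-diagonal part is decomposed as $\mathcal I_1+\mathcal I_2+\mathcal I_3$, where $\mathcal I_3=\sum_{k\le n}\frac{\zeta(k/n)}{2(k+a+ib)}(\mathcal P_{2(n+k)+d}-\mathcal P_{2(n-k)+d})$ exhibits the antisymmetric (conjugate-function) structure; via the representation \eqref{represent0} it becomes $\int\big(\sum_k\frac{\zeta(k/n)\sin 2kt}{k+a+ib}\big)e^{it(2n+d)}e^{-itL}f\,dt$ with a multiplier bounded uniformly in $n,a,b$, and the logarithm is beaten by the mixed-norm propagator estimate $\|\int_{-\pi/2}^{\pi/2}|e^{-itL}f|\,dt\|_q\lesssim\|f\|_p$ of Proposition \ref{pro_strong} (whose proof requires the endpoint Strichartz estimate and determines the shape of $\mathcal D$, in particular the chords $[\mathfrak D,\mathfrak F]$ that cut off the corner of $\{\varrho\le0\}$). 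The restricted weak type at $\mathfrak F,\mathfrak F'$ comes from the restricted weak type version of that propagator estimate, obtained by applying Lemma \ref{s-trick} to the dyadic pieces $\psi_j^\pm e^{-itL}$. None of these ingredients is present in your plan, so the proposal does not yield the theorem on the critical edge, nor at $\mathfrak D$, $\mathfrak D'$.
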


It is natural to expect that,  as an application of the uniform resolvent estimates, one may be able to show strong unique continuation property for the heat equation associated with $L$ as  in the previous works but we do not intend to pursue the matter here. 
When $p=q'$, the estimate \eqref{l-resolvent} was previously  obtained by  Cuenin \cite[Proposition 2.2]{Cu17} to show clustering estimate for eigenvalues of the twisted Laplacian with $L^p$ potentials. In fact, he obtained the resolvent estimate \eqref{resol_z} with bound $ (1+|\re z|)^{\varrho(q',q)}(1+ \delta(z)^{-1})$ where $\delta(z):=\dist (z, 2\N_0+d)$.


As in the case of Hermite resolvent estimate, the gap condition \eqref{gap} is necessary for the uniform estimate \eqref{l-resolvent} because the twisted Laplacian has discrete eigenvalues. Indeed, $\| (L-z)^{-1}\|_{p\to q} =|{\mu} -z|^{-1} \|f\|_q/\|f\|_p$  if $f$ is in the eigenspace corresponding to ${\mu}\in 2\N_0+d.$ Thus, the operator norm  goes to infinity as $z$ towards ${\mu}.$ 

By making use of   Theorem \ref{fractional}, one can easily show \eqref{l-resolvent} holds only for $1/p-1/q\le  1/d$. Thus,  {$1/p-1/q=1/d$} is the critical case for the estimate \eqref{l-resolvent}, and  it is  more difficult  to obtain the uniform estimate \eqref{l-resolvent} for $p,q$ satisfying $1/p-1/q=1/d$. As for the critical case, we establish \eqref{l-resolvent}  for  $(1/p,1/q)\in (\mathfrak F, \mathfrak F')$ in Theorem \ref{resol}. However, we could not obtain fully expected result. More precisely, from \eqref{resol_de} and the estimate for the  fractional twisted Laplacian operator (Theorem \ref{fractional}), one may expect that the uniform resolvent estimate \eqref{l-resolvent} holds for any $p,q$ for which the uniform spectral projection estimate $\|\pj\|_{p\to q}\lesssim 1$ holds. But there is a gap between the ranges of $p,q$ for which \eqref{est-main} holds with $\varrho(p,q)\le 0$ and \eqref{l-resolvent} holds (see the slashed region in Figure \ref{fig1}).

The rest of the paper is organized as follows. In Section 2 we provide a representation formula for $\pj$  which will be useful to show the sharp $L^p$--$L^q$ estimate for $\pj$. We separately prove the sufficiency and the necessary parts of  Theorem \ref{main} in Section 3 and Section 4. We provide  the proof of the uniform resolvent estimate for $L$  in Section 5.

\section{Preliminaries}

\subsection{Representation formula  for $\pj$} 
The Schr\"odinger propagator $e^{itL}$  associated with $L$ can be expressed  by using the spectral decomposition of $L$, that is to say, 
\[
\label{schrodinger} e^{itL} f= \sum_{\mu}   e^{it \mu} \pj f, \quad t\in \mathbb R.
\]
So, we clearly have 
\Be
\label{iso}  
\|e^{itL} f\|_2=\|f\|_2, \quad t\in\mathbb R.
\Ee
Since  the  eigenvalues of $L$ are in $2\mathbb N_0+d$, the difference of two eigenvalues $\mu, \mu'$ of  $L$ is in  $2\Z$, i.e., ${\mu} -{\mu}'\in 2\Z$. As in the case of the
Hermite spectral projection, $\pj$ is also written as follows: 
\Be 
\label{represent0} 
\pj f(z)=\frac1{\pi}\int_{-\pi/2}^{\pi/2} e^{it\mu} e^{-it L} f(z) dt,  \quad f\in \mathcal S(\R^{2d}).
\Ee
Set $z=x+iy$ and $z'=x'+iy'\in \C^d\cong \R^{2d}.$  The same idea of exploiting the specific form of the eigenvalues was already used in \cite{JLR}. We note that  the Schr\"odinger propagator $e^{-itL}$ also has the following kernel representation: 
\Be \label{dispersive} e^{-itL}f(z)=C_d    (\sin t)^{-d}\int e^{i ( \frac{|z-z'|^2}{4}\cot t +\frac12 \im z\cdot \overline{z}' )} f(z')\,dz', \Ee
where $C_d$ is a constant depending only on $d$. This can be easily deduced from the corresponding  kernel formula for the heat operator $e^{-tL}$ by replacing  $t$  with $it$ (see \cite[p.37]{Th93}).  Since $\sum_{\mu} \pj f$ converges absolutely and uniformly for $f\in \mathcal S(\R^{2d})$ (see \eqref{basic}),   we now get
\begin{align}
 \label{represent} 
 \pj f(z)= C_d  \int_{\C^{d}}\int_{-\pi/2}^{\pi/2} (\sin t)^{-d}e^{i (t {\mu} + \frac{|z-z'|^2}{4}\cot t +\frac12 \im z\cdot \overline{z}' )} f(z') dt\,dz'
 \end{align}
for any $f\in \mathcal S(\R^{2d})$.

Since  the kernel of $e^{it({\mu}-L)} $ has a singularity at $t=0$, we need to decompose it away from the singularity.  For any function $\eta\in C^\infty(\R)$, let us define $\pj[\eta]$ by 
\[\pj [\eta]f:=\frac1\pi\int_{-\pi/2}^{\pi/2} \eta(t) e^{it\mu} e^{-it L}f dt. \]
Let $\psi\in C^\infty_c([\frac14,1])$ be a smooth function such that  $\sum_{j\in \Z} \psi(2^jt)=1$ for all $t>0$. We choose $\psi^0$ so that  
\begin{equation} 
\label{partitionof1}\psi^0(t) + \sum_{j=3}^\infty \Big(\psi(2^jt)+\psi(-2^jt)\Big) =1 \end{equation}
for any $t\in[-\frac\pi2,\frac\pi2]$. 
Clearly $\psi^0$ is smooth  on $(-\frac\pi 2,\frac\pi2)$ and continuous and symmetric on $[-\frac\pi2,\frac\pi2]$ and we can  extend $\psi^0$ periodically to the whole real line with period $\pi$, which is also smooth. Using the partition of unity, we decompose the projection operator  as follows: 
\begin{equation}
\label{primary-decomp}
\pj  = \pj[\psi^0] + \sum_{j=3}^\infty  \Big(\pj[\psi^+_j ]+\pj[\psi^{-}_j ]\Big) 
\end{equation}
for any Schwartz function $f\in \mathcal S(\R^{2d})$ where 
$\psi^\pm_j(t)=\psi(\pm2^jt)$. 
We make further decomposition of $\pj[\psi^0]$ by breaking $\psi^0$ away from 
$\pi/2$ where the second derivative of the phase function vanishes. 
For the purpose  we denote $\wpsi=\psi(|\cdot|)$ 
and  define periodic functions $ \varphi^0$,  $\varphi_k$ of period $\pi$ by setting
\begin{equation}\label{cutoff}
\begin{aligned}
\quad \varphi_k(t)&=\psi^0(t) \wpsi(2^k(t-\pi/2)),  \\
 \varphi^0(t)&=\psi^0(t)\Big(1-\sum_{k\ge 5} \wpsi(2^k(t-\pi/2))\Big)
 \end{aligned}\end{equation}
 for $t\in (0, \pi)$. 
Hence, we get 
\begin{equation}\label{decom1}
\pj =\sum_{\pm}\sum_{j\ge 3}\pj[\psi^\pm_j] + \sum_{k\ge5}\pj[\varphi_k] + \pj[\varphi^0] 
.\end{equation}
 Since the eigenvalues of $L$ are in $2\N_0+d$, as before it is clear from spectral decomposition  that 
\begin{equation}\label{decom} e^{it(L-{\mu})} =e^{i(\pi+t)(L-{\mu})} .  \end{equation} 
 Thus, by periodicity it follows that 
 $\pj[\eta] f=\frac1{\pi} 
\int_{0}^{\pi}\eta(t) e^{it(L-{\mu})} f dt$
for any $\pi$ periodic $\eta$.  In particular, we have 
\Be
\label{proj-varphi}
\pj[\varphi_k]=\frac1{\pi} 
\int_{0}^{\pi}\varphi_k(t) e^{it(L-{\mu})} f dt, \quad  {k= 5, 6, \dots}.
\Ee

\subsection{Estimate for oscillatory integral}
Let the phase function $\phi$ be defined by
$$\phi(t):=\phi(z,z',t): = t +\frac{ |z-z'|^2}4 \cot t  +\frac12\im (z\cdot \overline z'),\quad z,z'\in\C^d.$$
We also define the oscillatory integrals $\CI_j$, $\CJ_k$ and $\CJ^0$ by 
\begin{align*}
\CI_j({\mu}) &:= \CI_j(z,z',{\mu}):=\int \eta(2^j t) e^{i{\mu} \phi(z,z',t)} dt, \\
\CJ_{k}({\mu}) &:= \CJ_{k}(z,z',{\mu}):=\int \psi^0( t)\eta(2^k(t-\pi/2)) e^{i{\mu} \phi(z,z',t)} dt,\\
\CJ^{0}({\mu}) &:= \CJ^{0}(z,z',{\mu}):=\int_0^\pi \varphi^0( t) e^{i{\mu} \phi(z,z',t)} dt,\end{align*}
for $j, k\in \Z$, ${\mu}\in\R$, and  $z,z'\in \C^d$ where $\eta$ is a  function supported in $[-1,-1/4]\cup [1/4,1]$.  
In what follows we show the estimates for ${\CI_j(\mu)}$, $\CJ_{k}({\mu})$, and $\CJ^{0}({\mu})$ which are crucial  for obtaining the sharp estimates for $\pj$.

\begin{lemma}\label{oscill}
 Let $d\ge1$, $j,k\ge 1$. Let $\eta$ be a function supported in $[-1,-1/4]\cup [1/4,1]$ and $|\frac{d^l}{dt^l} \eta(t)|\lesssim 1$, $l=0,1$. Then we have
\begin{align}
 |\CI_j(z,z',{\mu})|&\le C{\mu}^{-1/2}2^{-j/2},\label{Ij}\\
 |\CJ_{k}(z,z',{\mu})|&\le C{\mu}^{-1/2}2^{k/2},\label{Jk}\\
 |\CJ^0(z,z',{\mu})|&\le C{\mu}^{-1/2},\label{J0}
 \end{align}
with $C$ independent of $z,z'\in \C^d$, $j,k$, and ${\mu}>1.$
\end{lemma}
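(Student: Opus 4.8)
The plan is to prove all three bounds by the method of stationary phase / van der Corput for the single-variable oscillatory integral in $t$, with $z,z'$ held fixed, exploiting the concrete form of $\phi(t)=t+\frac{|z-z'|^2}{4}\cot t+\frac12\im(z\cdot\overline z')$. Since the last summand is independent of $t$, it contributes only a unimodular factor and can be ignored. Writing $a=\frac{|z-z'|^2}{4}\ge 0$, the relevant phase is $\varphi(t)=t+a\cot t$, with $\varphi'(t)=1-a\csc^2 t=1-a/\sin^2 t$ and $\varphi''(t)=2a\cos t/\sin^3 t$. The key structural fact is that on each of the dyadic pieces the amplitude is $C^1$ with the stated size bounds, and the support keeps $t$ (or $t-\pi/2$) in a fixed dyadic annulus, so we have good control of $\sin t$ and $\cos t$ there.

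First I would prove \eqref{Ij}. On the support of $\eta(2^jt)$ we have $|t|\sim 2^{-j}$, so $\sin t\sim 2^{-j}$ and $\varphi'(t)=1-a/\sin^2 t$, $\varphi''(t)=2a\cos t/\sin^3 t\sim a\,2^{3j}$ (up to the sign of $t$). I would split into two cases according to whether the critical point $\sin^2 t=a$ can occur in the support, i.e. whether $a\sim 2^{-2j}$. If $a\not\sim 2^{-2j}$ (say $a\le c\,2^{-2j}$ or $a\ge C\,2^{-2j}$), then $|\varphi'(t)|\gtrsim 1+a\,2^{2j}$ is bounded below and one integrates by parts once; the amplitude has derivative $O(2^j)$ from $\eta(2^j\cdot)$ and from the $1/\varphi'$ factor one picks up $\varphi''/(\varphi')^2$, and a short check shows the resulting bound is $\lesssim \mu^{-1}2^j\lesssim \mu^{-1/2}2^{-j/2}$ in the regime $2^{-j}\gtrsim \mu^{-1}$ (for $2^{-j}\lesssim \mu^{-1}$ the trivial bound $\lesssim 2^{-j}\lesssim \mu^{-1/2}2^{-j/2}$ already suffices). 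If $a\sim 2^{-2j}$, then $|\varphi''(t)|\sim a\,2^{3j}\sim 2^{j}$ on the support, so van der Corput's lemma (second-derivative version, with the $C^1$ amplitude absorbed via the standard $\|\partial_t(\text{amp})\|_{L^1}$ term) gives $|\CI_j|\lesssim (\mu\,2^{j})^{-1/2}(1+\|\partial_t\eta(2^j\cdot)\|_{L^1})\lesssim \mu^{-1/2}2^{-j/2}$, since $\|\partial_t\eta(2^j\cdot)\|_{L^1}\lesssim 1$. This is the decisive case and matches the claimed exponent.

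Next, \eqref{Jk} and \eqref{J0} concern $t$ near $\pi/2$, where $\cot t$ vanishes to first order. Set $s=t-\pi/2$, so $\cot t=-\tan s=-s+O(s^3)$, hence $\varphi(t)=\pi/2+s-a\tan s$ with $\varphi'=1-a\sec^2 s$ and $\varphi''=-2a\sec^2 s\tan s\sim -a\,s$ for small $s$ (here $\sec^2 s\sim 1$ and $\tan s\sim s$). Because $\psi^0$ also cuts away a neighborhood of $0$ and $\pi$, on the support $\sin t\sim 1$, so $a\lesssim 1$ does \emph{not} hold in general — rather $a$ ranges over $[0,\infty)$, but the only dependence on $a$ that matters is through $\varphi'$ and $\varphi''$. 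For \eqref{Jk}, on $\supp\eta(2^k(t-\pi/2))$ we have $|s|\sim 2^{-k}$; if $a\not\sim 1$ then $|\varphi'|=|1-a\sec^2 s|\gtrsim 1$ and one integration by parts gives $\lesssim \mu^{-1}(1+2^{k})\lesssim \mu^{-1/2}2^{k/2}$ in the relevant range; if $a\sim 1$ then $|\varphi''|\sim |s|\sim 2^{-k}$, so van der Corput gives $\lesssim (\mu\,2^{-k})^{-1/2}\lesssim \mu^{-1/2}2^{k/2}$ — note the exponent of $2^{k}$ is now positive, as claimed, reflecting the degeneracy. For \eqref{J0}, on $\supp\varphi^0$ we have $|t-\pi/2|\gtrsim 2^{-5}$ bounded below, so $|\varphi''|=|2a\cos t/\sin^3 t|$ and one must again separate: where $a\not\sim 1$, $|\varphi'|\gtrsim 1$ and a single integration by parts yields $\lesssim\mu^{-1}\lesssim\mu^{-1/2}$; where $a\sim 1$, $|\varphi''|\gtrsim 1$ uniformly on the support (since $|\cos t|/|\sin^3 t|$ is bounded below there away from $\pi/2$... one must be slightly careful that $\cos t$ itself does not vanish, but on $\supp\varphi^0$, $t$ is bounded away from $\pi/2$, so $|\cos t|\gtrsim 1$), whence van der Corput gives the clean $\mu^{-1/2}$.

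The main obstacle I anticipate is the bookkeeping in the "non-stationary" sub-cases: one must verify that integration by parts, combined with the worst-case size of $\varphi''/(\varphi')^2$ and the $O(2^j)$ (resp. $O(2^k)$) cost of differentiating the amplitude, still lands on the stated exponents uniformly in $a\in[0,\infty)$, and in particular that the threshold regions $a\sim 2^{-2j}$, $a\sim 1$ are correctly identified so that the stationary-phase estimate is only invoked where $|\varphi''|$ is genuinely of the expected size. A clean way to organize this is to note that $\varphi'$ is monotone in $t$ on each dyadic piece (since $\varphi''$ has a fixed sign there), so the integral over any sub-interval where $|\varphi'|$ is comparable to its endpoint values is handled by one integration by parts, and the one remaining sub-interval, where $\varphi'$ is small, is exactly where $|\varphi''|$ is of the asserted size — this dichotomy is precisely the content of the van der Corput lemma with the amplitude term, so in the end a single application of van der Corput's second-derivative estimate on each dyadic piece, after verifying the lower bound on $|\varphi''|$ in the $a\sim$ (threshold) regime and the trivial/one-step bound otherwise, suffices.
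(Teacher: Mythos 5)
Your proposal is correct and follows essentially the same route as the paper: apply van der Corput's lemma on each dyadic piece, splitting on the size of $|z-z'|$ (your $a$) so that one uses the first-derivative bound where $\phi'$ stays bounded below and the second-derivative bound in the threshold regime where $\phi'$ may vanish, with the $L^1$-norm of the amplitude derivative supplying the uniform constant. One small slip: in the non-stationary case of \eqref{Ij} a single integration by parts (or the first-derivative van der Corput with $\|\partial_t(\eta(2^j\cdot))\|_{L^1}=O(1)$) yields $|\CI_j|\lesssim\mu^{-1}$, not $\mu^{-1}2^j$, and it is $\mu^{-1}\lesssim\mu^{-1/2}2^{-j/2}$ (valid for $2^j\lesssim\mu$) together with the trivial bound $2^{-j}$ for $2^j\gtrsim\mu$ that closes the argument.
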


\begin{proof}  
To show {\eqref{Ij}--\eqref{J0}}, we make use of the well-known van der Corput's lemma (see, for example \cite[p.334]{Stein}).
We consider the time derivative of the phase function $\phi$ of the integrals {$\CI_j$, $\CJ_k$ and $\CJ^0$}. A simple computation shows 
\begin{equation}\label{phi'}
 \phi'(t) = \frac{4\sin^2 t-|z-z'|^2}{4\sin^2t}.\end{equation}
We first show \eqref{Ij} for $j\ge 1.$ If $|z-z'|\ge2$, there is no critical point of $\phi$ on $\supp \eta(2^j \cdot)$ because $\eta$ is supported in $[-1,-1/4]\cup [1/4,1]$. So,  it is easy to see that
\[ |\phi'(t)|\gtrsim 2^{2j} |(2\sin t-|z-z'|)(2\sin t+|z-z'|)|\gtrsim 2^{2j}\]
on the support of $\eta(2^j\cdot)$. Thus, applying  van der Corput's lemma yields 
\[ |\CI_j({\mu})|\lesssim \min\{ ({\mu} 2^{2j})^{-1},2^{-j}\}\lesssim {\mu}^{-1/2}2^{-3j/2}.\]
So, we may assume $|z-z'|<2$. If $|z-z'|>2^{3-j}$ or $|z-z'|<2^{-4-j}$,  by \eqref{phi'} 
we have $|\phi'(t)|\gtrsim 2^{2j}\max\{2^{-2j}, |z-z'|^2\}\gtrsim 1$. Hence,  by the van der Corput lemma we have $ |\CI_j({\mu})|\lesssim \min\{{\mu}^{-1}, 2^{-j}\}\lesssim {\mu}^{-1/2}2^{-j/2}.$ To complete the proof of \eqref{Ij}  we only need to consider the case 
\[ |z-z'|\sim 2^{-j}.\] 
Let us note that 
\Be
\label{second-d}
\phi''(t)=2\cos t|z-z'|^2 (\sin t)^{-3}.
\Ee
and $|\phi''|\gtrsim 2^j$
on the support of $\eta(2^j \cdot )$.  Applying van der Corput's lemma again, we have \eqref{Ij}. 
This completes the proof of \eqref{Ij}.

We next show the estimate \eqref{Jk}  for $k\ge1$. If $|z-z'|\le 1/2$, we have $|\phi'(t)|\gtrsim 1$ on the support of $\psi^0$ because  $2|\sin t|\ge 1$. Thus  $|\CJ_{k}({\mu})|\lesssim {\mu}^{-1}\le {\mu}^{-1/2}2^{k/2}.$ 
We may now assume $|z-z'|>1/2$. Using \eqref{second-d}, we have  
\[ |\phi''(t)|\gtrsim |\cos t|=|\cos t-\cos( \pi/2)|\gtrsim 2^{-k} \]
on $\supp \psi^0(\cdot) \eta(2^k(\cdot - \pi/2))$.
Thus, van der Corput's lemma gives the desired result \eqref{Jk}.

Finally,  noting  that $\dist(\supp \varphi^0, \{0,\pi/2,\pi\})\ge c$ for some $c>0$ because of \eqref{cutoff}, we see  
{that  $|\phi'(t)|\gtrsim 1$ if $|z-z'|\le 1/2$  and $|\phi''(t)|\gtrsim 1$}  {if $|z-z'|\ge1/2$ on the support of $\varphi^0$}. Hence, the estimate  \eqref{J0} follows from  the van der Corput lemma.
\end{proof}

We frequently  make use of the following summation trick  to handle  the endpoint cases \cite{bak, Car99}.

\begin{lemma}\label{s-trick} \cite[Lemma 2.4]{JLR}   Let $1\le p_l,  q_l\le \infty$ and $\epsilon_l>0$ for $l=0,1$,  and  set $\theta=\frac {\epsilon_0}{\epsilon_0+\epsilon_1}$, 
$\frac1p_\ast=\frac \theta{p_1}+ \frac {1-\theta}{p_0},$ and $\frac1q_\ast=\frac \theta{q_1}+\frac {1-\theta}{q_0}. $  Suppose that  $T_j$, $j\in \mathbb Z $  are sublinear operators defined from $L^{p_l}\to  L^{q_l}$ with 
\[ \|T_j\|_{p_l\to q_l}\le B_l 2^{ j (-1)^l \epsilon_l }, \quad l=0,1.\]   
Then we have the following.
\vspace{-6pt}
\begin{enumerate} 
[leftmargin=.65cm, labelsep=0.15 cm, topsep=0pt]
\item[$(a)$] If $p_0=p_1=p$ and $ q_0\neq q_1$, then   $\|\sum_j T_j f  \|_{L^{q_\ast,\infty}}\lesssim B_0^{1-\theta}B_1^\theta \|f\|_{p}$. 
\item[$(b)$] If $q_0=q_1=q$ and $ p_0\neq p_1$, then $\|\sum_j T_j f  \|_{L^{q}}\lesssim B_0^{1-\theta}B_1^\theta \|f\|_{p_\ast,1}$.
\item[$(c)$] If $p_0\neq p_1$ and $ q_0\neq q_1$, then  $\|\sum_j T_j f  \|_{L^{q_\ast,\infty}}\lesssim B_0^{1-\theta}B_1^\theta \|f\|_{p_\ast, 1}$.
\end{enumerate}
\end{lemma}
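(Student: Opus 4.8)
The plan is to deduce all three statements from two elementary facts: the geometric‑series bound
\[
\sum_{j\in\Z}\min\bigl(a\,2^{j\epsilon_0},\,b\,2^{-j\epsilon_1}\bigr)\lesssim a^{1-\theta}b^{\theta},\qquad a,b>0,
\]
valid with a constant depending only on $\epsilon_0,\epsilon_1$ (the summand crosses over at $2^{j(\epsilon_0+\epsilon_1)}=b/a$, and $a^{1-\theta}b^{\theta}$ is precisely the crossover value), together with the equivalence $\|f\|_{r,1}\sim\sum_{n\in\Z}2^n|\{|f|>2^n\}|^{1/r}$ and the size decomposition $f=\sum_n f_n$, $f_n:=f\chi_{\{2^n\le|f|<2^{n+1}\}}$, for which $\|f_n\|_{r}\le 2^{n+1}|\{|f|>2^n\}|^{1/r}$ for every $r\in[1,\infty]$. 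Throughout I use sublinearity only in the form $|\sum_j T_j f|\le\sum_j|T_j f|$ (and the analogue in $f$).

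First I would prove $(a)$, which also serves as the engine for $(c)$. Put $g_j=T_jf$, $g=\sum_j g_j$, assume $q_0<q_1$ (hence $q_0<q_\ast<q_1$), and fix $\lambda>0$. Split at a threshold $J=J(\lambda)$: with $G_1=\sum_{j\le J}|g_j|$, $G_2=\sum_{j>J}|g_j|$, the triangle inequality and the hypotheses give $\|G_1\|_{q_0}\lesssim B_0 2^{J\epsilon_0}\|f\|_p$ and $\|G_2\|_{q_1}\lesssim B_1 2^{-J\epsilon_1}\|f\|_p$, so Chebyshev yields $|\{|g|>\lambda\}|\lesssim(B_0 2^{J\epsilon_0}\|f\|_p/\lambda)^{q_0}+(B_1 2^{-J\epsilon_1}\|f\|_p/\lambda)^{q_1}$. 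Choosing $J$ so that $B_0 2^{J\epsilon_0}\|f\|_p/\lambda=(B_0^{1-\theta}B_1^{\theta}\|f\|_p/\lambda)^{q_\ast/q_0}$, a short computation using $1/q_\ast=\theta/q_1+(1-\theta)/q_0$ and $\theta\epsilon_1=(1-\theta)\epsilon_0$ shows this forces $B_1 2^{-J\epsilon_1}\|f\|_p/\lambda=(B_0^{1-\theta}B_1^{\theta}\|f\|_p/\lambda)^{q_\ast/q_1}$ as well, so both terms equal $(B_0^{1-\theta}B_1^{\theta}\|f\|_p/\lambda)^{q_\ast}$; taking the supremum over $\lambda$ gives $(a)$. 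The very same computation, with $\|f\|_p$ replaced by arbitrary constants in the two hypotheses $\|g_j\|_{q_0}\le C_0 2^{j\epsilon_0}$, $\|g_j\|_{q_1}\le C_1 2^{-j\epsilon_1}$, yields the core estimate $\|\sum_j g_j\|_{q_\ast,\infty}\lesssim C_0^{1-\theta}C_1^{\theta}$, which I would record as a stand‑alone fact.

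For $(b)$ and $(c)$ I would decompose $f=\sum_n f_n$ as above and set $C_{l,n}=B_l\,2^{n+1}|\{|f|>2^n\}|^{1/p_l}$, so $\|T_jf_n\|_{q_l}\le C_{l,n}2^{j(-1)^l\epsilon_l}$. In case $(b)$, where $q_0=q_1=q$, summing in $j$ by the geometric‑series bound gives $\|\sum_jT_jf_n\|_q\lesssim C_{0,n}^{1-\theta}C_{1,n}^{\theta}\sim B_0^{1-\theta}B_1^{\theta}\,2^n|\{|f|>2^n\}|^{1/p_\ast}=:D_n$, whence $\|\sum_jT_jf\|_q\le\sum_n\|\sum_jT_jf_n\|_q\le\sum_nD_n\lesssim B_0^{1-\theta}B_1^{\theta}\|f\|_{p_\ast,1}$ by the $L^q$ triangle inequality and the norm equivalence. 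In case $(c)$ I would instead apply the core estimate from $(a)$ at each level, obtaining $\|\sum_jT_jf_n\|_{q_\ast,\infty}\lesssim C_{0,n}^{1-\theta}C_{1,n}^{\theta}\sim D_n$, and then sum the levels. The cases in which some exponent equals $\infty$ need only cosmetic changes (e.g. if $q_0=\infty$ in the core estimate, pick $J$ so that $\sum_{j\le J}\|g_j\|_\infty\le\lambda/2$, discarding $G_1$).

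The bookkeeping and the geometric sum are routine; the one step that genuinely uses the hypotheses is the passage from the per‑level bound $\|\sum_jT_jf_n\|_{q_\ast,\infty}\lesssim D_n$ to $\|\sum_n\sum_jT_jf_n\|_{q_\ast,\infty}\lesssim\sum_nD_n$ in $(c)$: the naive inequality $\|\sum_nh_n\|_{q_\ast,\infty}\le\sum_n\|h_n\|_{q_\ast,\infty}$ fails when $q_\ast=1$. Here the hypothesis $q_0\ne q_1$ (with $q_0,q_1\ge1$) is exactly what is needed, since it forces $1/q_\ast=\theta/q_1+(1-\theta)/q_0$ to lie strictly between $1/q_0$ and $1/q_1$, hence $q_\ast\in(1,\infty)$, so that $L^{q_\ast,\infty}$ is equivalent to a Banach space and the level sums converge with a constant depending only on $q_\ast$. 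That is the point I expect to be the main (and essentially the only) obstacle; combining it with the previous paragraph completes $(c)$, and with it the lemma.
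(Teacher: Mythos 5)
Your argument is correct: the threshold split in $j$ combined with Chebyshev gives the weak-type core estimate, the level-set decomposition of $f$ handles the $L^{p_\ast,1}$ input, and your observation that $q_0\neq q_1$ forces $1<q_\ast<\infty$ (so that $L^{q_\ast,\infty}$ is normable and the levels can be summed) correctly closes the only delicate step in $(c)$. The paper itself gives no proof --- it quotes the lemma from \cite[Lemma 2.4]{JLR}, i.e.\ Bourgain's summation trick as in \cite{bak,Car99} --- and your proof is essentially the standard argument used there.
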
 

We close this section with some sharp $L^1$--$L^2$, $L^1$--$L^\infty$ estimates for $\pj[\eta_k]$, which are useful in  showing the weak type estimate for $\pj$ at $(1/p,1/q)\in (\mathfrak B,\mathfrak C].$ 
\begin{lemma}\label{local} Let $d\ge 1$, ${\mu}\in 2\N_0+d$, and $k\in \N_0$. 
Suppose  $\eta_k \in C^\infty_c([-\pi/2,\pi/2])$  
such that  $\supp \eta_k$ is contained in an interval of length $\sim 2^{-k}$ and satisfies $|\frac{d^l}{dt^l}\eta_k(t)|\le C2^{kl}$ for all $l\in \N_0$. If $2^{k}\lesssim {\mu}$, then  we have
\begin{align}
\|\pj [\eta_k]\|_{1\to 2}&\lesssim 2^{-k/2}{\mu}^{\frac{d-1}2},\label{12}\\
\|\pj [\eta_k]\|_{1\to \infty}&\lesssim {\mu}^{d-1} .\label{1infty}
\end{align}
\end{lemma}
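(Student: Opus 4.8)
\textbf{Proof plan for Lemma \ref{local}.}

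The natural approach is to start from the representation formula \eqref{represent} localized by $\eta_k$, i.e.\ to write
\[
\pj[\eta_k]f(z)=C_d\int_{\C^d}\int \eta_k(t)(\sin t)^{-d}e^{i(t\mu+\frac{|z-z'|^2}4\cot t+\frac12\im z\cdot\overline{z}')}f(z')\,dt\,dz',
\]
so that the kernel $K_k(z,z')$ of $\pj[\eta_k]$ is the oscillatory integral in $t$ with amplitude $\eta_k(t)(\sin t)^{-d}$. Since $\supp\eta_k$ has length $\sim 2^{-k}$ and (after a harmless decomposition) may be taken to lie in a dyadic region $|t-t_0|\lesssim 2^{-k}$ with either $|t_0|\sim 2^{-k}$ (near the singularity of $(\sin t)^{-d}$) or $|\sin t|\sim 1$, one has $|(\sin t)^{-d}|$ comparable to a constant times either $2^{kd}$ or $1$ on the support; in all cases $|\frac{d^l}{dt^l}[\eta_k(t)(\sin t)^{-d}]|\lesssim 2^{kl}\cdot(\text{amplitude size})$. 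For \eqref{1infty} I would bound the kernel pointwise: trivially $|K_k(z,z')|\lesssim 2^{-k}\cdot(\sin t)^{-d}$-size, which gives $2^{-k}\cdot 2^{kd}=2^{k(d-1)}\lesssim\mu^{d-1}$ when $2^k\lesssim\mu$ near the singularity, and $2^{-k}\lesssim 1\lesssim\mu^{d-1}$ away from it; then $\|\pj[\eta_k]\|_{1\to\infty}=\sup_{z,z'}|K_k(z,z')|\lesssim\mu^{d-1}$. One should double-check that the contribution near $t=\pm\pi/2$, where $\phi''$ degenerates, does not cost more than $2^{-k}$ without oscillation — but for the crude $L^1\to L^\infty$ bound no oscillation is even needed, only the size of the amplitude times the length of the $t$-interval, so this is safe.

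For \eqref{12} I would use a $TT^*$/orthogonality argument rather than a pointwise kernel bound. The operator $\pj[\eta_k]\pj[\eta_k]^*$ has kernel $\int K_k(z,w)\overline{K_k(z',w)}\,dw$; alternatively, and more efficiently, write $\pj[\eta_k]^*\pj[\eta_k]$ using $\pj[\eta_k]^*=\pj[\overline{\eta_k}]$ and the group structure $e^{-itL}(e^{-isL})^*=e^{-i(t-s)L}$, so that $\pj[\eta_k]^*\pj[\eta_k]=\frac1{\pi^2}\int\!\!\int\overline{\eta_k(s)}\eta_k(t)e^{i(t-s)\mu}e^{-i(t-s)L}\,ds\,dt=\pj[\widetilde\eta_k]$ where $\widetilde\eta_k(\tau)=\frac1\pi\int\overline{\eta_k(s)}\eta_k(s+\tau)\,ds$ is supported in an interval of length $\sim 2^{-k}$ with $|\frac{d^l}{d\tau^l}\widetilde\eta_k|\lesssim 2^{-k}2^{kl}$ — i.e.\ $\widetilde\eta_k=2^{-k}\eta_k^\sharp$ for a normalized bump $\eta_k^\sharp$ of the same type. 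Hence $\|\pj[\eta_k]\|_{1\to 2}^2=\|\pj[\eta_k]^*\pj[\eta_k]\|_{1\to\infty}=2^{-k}\|\pj[\eta_k^\sharp]\|_{1\to\infty}\lesssim 2^{-k}\mu^{d-1}$ by \eqref{1infty} applied to $\eta_k^\sharp$. Taking square roots gives $\|\pj[\eta_k]\|_{1\to 2}\lesssim 2^{-k/2}\mu^{(d-1)/2}$, which is \eqref{12}. (One may instead invoke \eqref{Ij}--\eqref{J0} of Lemma \ref{oscill} to get the $\mu^{-1/2}$ gain in the oscillatory integral and combine it with the $2^{kd}$ or $1$ amplitude size to reach the same conclusion directly, but the $TT^*$ route avoids re-deriving the stationary phase bound.)

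The main technical point to get right is the case division on the location of $\supp\eta_k$ relative to the singular points $\{0,\pm\pi/2\}$ of the integrand — specifically bookkeeping the amplitude size $(\sin t)^{-d}$ and its derivatives under the hypothesis $2^k\lesssim\mu$, which is exactly what keeps $2^{k(d-1)}\lesssim\mu^{d-1}$. I expect that to be routine given Lemma \ref{oscill} and the remarks after \eqref{cutoff}; the only mild subtlety is confirming that $\widetilde\eta_k$ (the autocorrelation of $\eta_k$) genuinely inherits the localization and derivative bounds of $\eta_k$ up to the factor $2^{-k}$, which follows from a direct estimate of $\frac1\pi\int\overline{\eta_k(s)}\eta_k(s+\tau)\,ds$ and its derivatives in $\tau$. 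No heavier machinery should be needed.
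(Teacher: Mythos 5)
Your route is genuinely different from the paper's, and it has a gap that is not cosmetic. The paper proves both bounds purely spectrally: writing $\pj[\eta_k]f=\frac1\pi\sum_{\mu'}\widehat{\eta_k}(\mu'-\mu)\mathcal P_{\mu'}f$, it combines the known bounds $\|\mathcal P_{\mu'}\|_{1\to2}\lesssim(\mu')^{\frac{d-1}2}$ and $\|\mathcal P_{\mu'}\|_{1\to\infty}\lesssim(\mu')^{d-1}$ from \eqref{known} with the decay $|\widehat{\eta_k}(\tau)|\le C_N2^{-k}(1+2^{-k}|\tau|)^{-N}$ (which follows from the support and derivative hypotheses alone, wherever the support sits), and sums over the $\sim2^k$ relevant eigenvalues using $2^k\lesssim\mu$. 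That argument never sees the kernel of $e^{-itL}$ and is insensitive to the location of $\supp\eta_k$.

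The gap in your argument is the treatment of $\eta_k$ whose support approaches $t=0$ at a scale finer than $2^{-k}$. The hypothesis only says $\supp\eta_k$ lies in some interval of length $\sim2^{-k}$; it may contain $0$ or come within distance $\ll2^{-k}$ of it, and then your reduction to ``either $|t_0|\sim2^{-k}$ or $|\sin t|\sim1$'' is false and the trivial bound $|K_k(z,z')|\lesssim\int|\eta_k(t)||\sin t|^{-d}\,dt$ either diverges or exceeds $2^{k(d-1)}$: near $t=0$ the representation \eqref{represent} is an oscillatory integral whose amplitude is not absolutely integrable, since $e^{-itL}$ tends to the identity there. One might object that the $\eta_k$'s actually used in the paper avoid this case, but your own $TT^*$ step does not: the autocorrelation $\widetilde\eta_k(\tau)=\frac1\pi\int\overline{\eta_k(s)}\eta_k(s+\tau)\,ds$ is always supported in $[-c2^{-k},c2^{-k}]$, i.e.\ in a neighborhood of $\tau=0$, no matter where $\supp\eta_k$ was. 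So the deduction of \eqref{12} from \eqref{1infty} applied to $\eta_k^\sharp=2^{k}\widetilde\eta_k$ invokes precisely the instance of \eqref{1infty} that your kernel-size argument cannot deliver. To close the gap you would need either a stationary-phase analysis of $\int\widetilde\eta_k(\tau)(\sin\tau)^{-d}e^{i\mu\phi}\,d\tau$ near $\tau=0$ (a dyadic decomposition in $\tau$ plus the bounds \eqref{Ij} handles the scales $2^{-j}\gtrsim\mu^{-1}$, but the innermost piece at scale $\mu^{-1}$ still needs a separate argument), or simply the paper's spectral-sum proof, which gives both \eqref{12} and \eqref{1infty} directly with no case analysis. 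The algebraic skeleton of your $TT^*$ reduction ($\|\pj[\eta_k]\|_{1\to2}^2=\|\pj[\widetilde\eta_k]\|_{1\to\infty}$, with $\widetilde\eta_k$ of size $2^{-k}$) is correct and would be a nice way to derive \eqref{12} from \eqref{1infty} if \eqref{1infty} were first established for bumps centered at the origin; as written, it is circular with respect to the missing case.
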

\begin{proof} We prove \eqref{12} and \eqref{1infty} by combining with  the known $L^1$--$L^2$ estimate for $\pj$ (\eqref{known}) and  the spectral decomposition.  Note that 
\begin{equation}\label{spectral}
 \pj[\eta_k] f =\sum_{{\mu}'} \frac1{\pi}\int_{-\pi/2}^{\pi/2} \eta_k(t) e^{it({\mu}-{\mu}')}P_{{\mu}'} fdt=
 \frac1{\pi}\sum_{{\mu}'} \widehat \eta_k({\mu}'-{\mu})P_{{\mu}'}f.\end{equation}
 By orthogonality and the estimate 
 $\|\pj\|_{1\to 2}\lesssim \mu^{\frac{d-1}2}$ (\eqref{known}) we see that 
 \begin{align*}
\|\pj f\|_2^2\lesssim  \sum_{{\mu}'} |\widehat \eta_k({\mu}'-{\mu})|^2 \|P_{{\mu}'} f\|^2_2&\le C\sum_{{\mu}'}|\widehat \eta_k({\mu}'-{\mu})|^2({\mu}')^{d-1} \|f\|^2_1.
\end{align*}
Since $|\widehat \eta_k(t)|\le C_N 2^{-k}(1+2^{-k}|t|)^{-N}$ for any $N$ with $C_N$ independent of $k$ and since 
 $2^{k}\lesssim {\mu}$, 
 we have
\begin{align*}
\|\pj f\|_2^2
&\lesssim \sum_{{\mu}'}2^{-2k}(1+2^{-k}|{\mu}'-{\mu}|)^{-N}({\mu}')^{d-1}\|f\|^2_1
\lesssim 2^{-k}{\mu}^{d-1}\|f\|_1^2,
\end{align*}
which yields \eqref{12}.    The estimate \eqref{1infty} can be shown  in the same manner using \eqref{spectral} since  we have $\|P_{{\mu}}f\|_\infty\lesssim {\mu}^{d-1}\|f\|_1$  by \eqref{known} and duality.  We omit the detail. 
\end{proof}

\section{Proof of Theorem \ref{main}: Sufficiency part}
In this section, we show \eqref{est-main} for $p,q$ satisfying $1\le p\le 2\le q\le \infty$ and   $(1/p,1/q)\not\in [\mathfrak B, \mathfrak C]\cup[\mathfrak B', \mathfrak C']$ and  obtain the weak/restricted-weak type estimates for $\pj$ for  $(1/p,1/q)\in [\mathfrak B, \mathfrak C]\cup[\mathfrak B', \mathfrak C']$. Our argument here is similar with  the one  used in the proof of the local estimate for the Hermite spectral projection  (Theorem 1.5 of \cite{JLR}).

From the known $L^2$--$L^q$ bound for $\pj$(\eqref{known}) and duality, we already have \eqref{est-main} when $p=2$, $q=2$, or $p=q'.$ Thus, by duality and interpolation, it suffices to show \eqref{est-main} for $(1/p,1/q)\in\mathcal R_1$, the  weak type estimate $\|\pj \|_{L^{p}\to L^{q,\infty}}\lesssim {\mu}^{\varrho (p,q)}$ for $(1/p,1/q)\in(\mathfrak B,\mathfrak C]$ (the assertion $(i)$), and the restricted-weak type estimate $\|\pj \|_{L^{p,1}\to L^{q,\infty}}\lesssim {\mu}^{\varrho(p,q)}$ at $(1/p,1/q)= \mathfrak B$ (the assertion $(ii)$).

\subsection*{Strong type  estimate for $\pj$ when $\ppq\in \mathcal R_1$}
We first prove \eqref{est-main} for $(\frac1p,\frac1q)\in \mathcal R_1$. In view of \eqref{decom1} and Lemma \ref{s-trick}, it is enough to show that for $j\ge3$ and $k\ge5$ 
\begin{align}
\|\pj[\psi_j^\pm]\|_{p\to q} &\lesssim {\mu}^{-\frac12(\frac1p-\frac1q)} 2^{j(-1+\frac{2d+1}2(\frac1p-\frac1q))},\label{TT*}\\
\|\pj [\varphi_k]\|_{p\to q}&\lesssim {\mu}^{-\frac12(\frac1p-\frac1q)} 2^{k(-1+\frac{2d+1}2(\frac1p-\frac1q))},\label{TT*k}\\
\|\pj[\varphi^0]\|_{p\to q} &\lesssim {\mu}^{-\frac12(\frac1p-\frac1q)},\label{TT*0}
\end{align} 
whenever $\ppq$ is in the closed quadrangle $\mathcal Q(d)$ with vertices $(\frac12,\frac12)$, $\mathfrak A$, $(1,0)$, and $\mathfrak A'$.
Indeed, by \eqref{decom1}, \eqref{TT*}, {\eqref{TT*k}} and the triangle inequality, we obtain
\begin{align*}
 \|\pj\|_{p\to q}&\lesssim  \sum_\pm \sum_{j\ge 3}\|\pj[\psi_j^\pm]\|_{p\to q}+\sum_{k\ge5}\|\pj [\varphi_k]\|_{p\to q} +\|\pj[\varphi^0]\|_{p\to q}\lesssim {\mu}^{-\frac12(\frac1p-\frac1q)}\end{align*}
if $\ppq\in \mathcal Q(d)$ satisfying $\frac1p-\frac1q <\frac2{2d+1}$. For $ \ppq=\mathfrak B$ or $\mathfrak B' \in \mathcal Q(d)$, which satisfies $\frac1p-\frac1q =\frac2{2d+1}$,   $(c)$ in  Lemma \ref{s-trick}  implies
\[ \|\pj\|_{L^{p,1}\to L^{q,\infty}}\lesssim {\mu}^{-\frac12(\frac1p-\frac1q)}.\]
Moreover, this shows the assertion $(ii)$ in Theorem \ref{main}.
By  real interpolation between the restricted-weak type $(p,q)$ estimates with $\ppq=\mathfrak B$ and $\mathfrak B'$, we get \eqref{est-main} for $\ppq \in(\mathfrak B,\mathfrak B')$ and, hence,  for all $\ppq\in \mathcal R_1$.  

As to be seen later,  better bounds  are possible for $\pj[\varphi_k]$ and $\pj[\varphi^0]$, but   \eqref{TT*k} and \eqref{TT*0} are sufficient for our purpose.

We now show \eqref{TT*}--\eqref{TT*0} for $\ppq\in \mathcal Q(d).$ 
Thanks to \eqref{schrodinger}  we clearly have the isometry $\|e^{-it L}f\|_2=\|f\|_2$.   
It is clear that  the modified $TT^*$-argument  in \cite[Lemma 2.3]{JLR} works without modification. 
 From \eqref{Ij}, we have  $\|\pj [\eta_j]\|_{1\to \infty}\lesssim {\mu}^{-1/2}2^{j(d-1/2)}$ whenever $\eta_j$ is a smooth function supported in $[-2^{-j},-2^{-j-2}]\cup[2^{-j-2},2^{-j}]$ and satisfies $|\frac{d^l}{d t^l} \eta_j(t)|\le C2^{jl}$ for $l=0,1,2$.   Thus,  \cite[Lemma 2.3]{JLR} gives the estimate \eqref{TT*}
for   $j\ge3$ and  $(\frac1p,\frac1q)\in\mathcal Q(d)$. 

 We next consider the estimate  \eqref{TT*k} for $\pj[\varphi_k]$. Here, the cutoff function $\varphi_k$ is supported near $\frac \pi2$. So,  Lemma 2.3 in \cite{JLR} does not apply directly, but a little modification of the argument gives  the desired result. Since $|\sin t|\gtrsim 1 $ on the support of $\varphi_k$, from \eqref{Jk}, we have $\|\pj [\varphi_k]\|_{1\to \infty} \lesssim {\mu}^{-1/2}2^{k/2}$  for any $k\ge5$. Taking interpolation with  $\|\pj [\varphi_k]\|_{2\to2}\lesssim 2^{-k}$ which follows from the isometry \eqref{iso} and Minkowski's inequality,  we get 
\[ \|P_{{\mu}}[\varphi_k]\|_{p\to p'}\lesssim {\mu}^{-\frac12(\frac1p-\frac1{p'})} 2^{k(-1+\frac32(\frac1p-\frac1{p'}))}\le {\mu}^{-\frac12(\frac1p-\frac1{p'})} 2^{k(-1+\frac{2d+1}2(\frac1p-\frac1{p'}))}\]
 for $1\le p\le 2$. 
Thus,  in order to show \eqref{TT*k}  for $\ppq\in \mathcal Q(d)$, by interpolation and duality it suffices to show \eqref{TT*k} with  $\ppq=(1/p_\circ,1/q_\circ):=\mathfrak A$, i.e.,  $q_\circ=2$ and $p_\circ=2(2d+1)/(2d+3)$. Equivalently, we will show that
\begin{equation}\label{tem1}
 \|\pj[\varphi_k]^*\pj[\varphi_k]\|_{p \to q}\lesssim  {\mu}^{-\frac12(\frac1{p}-\frac1{q})}2^{-k} \end{equation}
 with $(p,q)=(p_\circ, p_\circ')$. 
By a simple change of variables, we see that 
\[ \pj[\varphi_k]^*\pj[\varphi_k] f= \int \varphi_k(s)\int \varphi_k(t+s)e^{it({\mu}-L)} fdt\,ds.\]
Here, we note that  the support of $\varphi_k(\cdot+s)$ is contained in $[-2^{-k+1},2^{-k+1}]$ for any $s\in \supp \varphi_k$. Let us define $(\pj[\varphi_k]^*\pj[\varphi_k])_l $ for $l\in\Z$ by 
\[ (\pj[\varphi_k]^*\pj[\varphi_k])_l f:=\int \varphi_k(s)\int \varphi_k(t+s)\wpsi(2^l t)e^{it({\mu}-L)} fdtds  \]
and we may write
\[ \pj[\varphi_k]^*\pj[\varphi_k]  =\sum_{l\ge k-2} (\pj[\varphi_k]^*\pj[\varphi_k])_l  .\]
Note that $l\ge 3$ and  the estimate \eqref{TT*} is valid with $\psi_j^\pm$  replaced by a smooth function $\eta_j$ supported in $[-2^{-j},-2^{-j-2}]\cup[2^{-j-2},2^{-j}]$ and satisfies $|\frac{d^n}{d t^n} \eta_j(t)|\le C2^{jn}$ for $n=0,1,2$. Applying \eqref{TT*} to  $\pj[\varphi_k(\cdot+s)\wpsi(2^l\cdot)]$, 
we have 
\[ \| (\pj[\varphi_k]^*\pj[\varphi_k])_l\|_{p\to q} \lesssim {\mu}^{-\frac12(\frac1p-\frac1q)}2^{-k} 2^{l(-1+\frac{2d+1}2(\frac1p-\frac1q))}\]
for all $(1/p,1/q)\in\mathcal Q(d)$.
As before, Lemma \ref{s-trick} gives
\[\| \pj[\varphi_k]^*\pj[\varphi_k]  \|_{L^{p,1}\to L^{q,\infty}}\lesssim {\mu}^{-\frac12(\frac1p-\frac1q)}2^{-k} \]
for $(1/p,1/q)\in\mathcal Q(d)$ satisfying $1/p-1/q=2/(2d+1)$. Real interpolation yields \eqref{tem1} for $ (1/p,1/q)\in(\mathfrak B,\mathfrak B').$ 
Since $(1/p_\circ,1/p_\circ')$ is the intersection between the line segment $(\mathfrak B,\mathfrak B')$ and the line of duality, we get the desired estimate \eqref{tem1}  with $(p,q)=(p_\circ, p_\circ')$.

It remains to show the estimate for $\pj[\varphi^0]$. As before, $|\sin t|\gtrsim 1$ on the support of $\varphi^0$. So, from \eqref{J0} we have $\| \pj  [\varphi^0]\|_{1\to \infty} \lesssim \mu^{-1/2}.$ Interpolating this with the $L^2$ estimate derived from the $L^2$ isometry of $e^{itL}$, we obtain for $1\le p\le 2$
\begin{equation}\label{tempp'}
\| \pj [\varphi^0]\|_{p\to p'}\lesssim \mu^{-\frac12(\frac1p-\frac1{p'})}.
\end{equation}
Thus, in view of interpolation, it is enough to show \eqref{TT*0} for $(1/p,1/q)=\mathfrak A$. Equivalently, we will show 
\[
\| \pj [\varphi^0]^* \pj [\varphi^0]\|_{p\to q}\lesssim \mu^{-\frac12(\frac1{p}-\frac1{q})}
\]
for $(p,q)=(p_\circ,p_\circ')$ with $p_\circ=2(2d+1)/(2d+3).$ Actually, it follows from the known bounds \eqref{TT*}, \eqref{TT*k} and \eqref{tempp'}.
Indeed, from the periodicity of $\varphi^0$ and \eqref{decom} it is easy to see that
\begin{align*}
  \pj [\varphi^0]^* \pj [\varphi^0] f 
 &=\int_{-\frac \pi2}^{\frac \pi 2}\varphi^0(s)
 \int_{-\frac \pi2}^{\frac \pi 2}\varphi^0(t+s) e^{it(\mu-L)}f dt ds.
 \end{align*}
 So, using this and the partition of unity \eqref{partitionof1} we note that $\pj [\varphi^0]^* \pj [\varphi^0] $ equals
\begin{align*}
\int_{-\frac \pi2}^{\frac \pi 2}\varphi^0(s)
\Big\{ \sum_\pm\sum_{j\ge 3}  \pj [\varphi^0(\cdot +s)\psi_j^\pm] +\sum_{k\ge 5}  \pj [\varphi^0(\cdot +s)\varphi_k]+ \pj [\varphi^0(\cdot +s)\varphi^0]\Big\} ds.
\end{align*}
We have already had estimates for $ \pj [\varphi^0(\cdot +s)\psi_j^\pm] $ and $ \pj [\varphi^0(\cdot +s)\varphi_k]$ (\eqref{TT*},\eqref{TT*k}). Thus, Lemma \ref{s-trick} and the real interpolation imply 
\[ \| \sum_\pm\sum_{j\ge 3}  \pj [\varphi^0(\cdot +s)\psi_j^\pm] +\sum_{k\ge 5}  \pj [\varphi^0(\cdot +s)\varphi_k]\|_{p\to q}\lesssim \mu^{-\frac12(\frac1p-\frac1q)}\]
for $(1/p,1/q)\in(\mathfrak B, \mathfrak B')$. Since $(1/p_\circ, 1/p_\circ')$ is in $(\mathfrak B, \mathfrak B')$, this particularly yields
\[ \Big\| \int_{-\pi/2}^{\pi/2} \varphi^0(s)\Big(\sum_\pm\sum_{j\ge 3}  \pj [\varphi^0(\cdot +s)\psi_j^\pm] +\sum_{k\ge 5}  \pj [\varphi^0(\cdot +s)\varphi_k]\Big) ds\Big\|_{p_\circ\to p_\circ'}\lesssim \mu^{-\frac12(\frac1{p_\circ} -\frac1{p_{\circ}'})}.\] 
Moreover, from \eqref{tempp'}, we also have 
\[ \Big\| \int_{-\pi/2}^{\pi/2} \varphi^0(s)  \pj [\varphi^0(\cdot +s) \varphi^0]  ds\Big\|_{p_\circ\to p_\circ'}\lesssim \mu^{-\frac12(\frac1{p_\circ} -\frac1{p_{\circ}'})}.\] 
Combining these two estimates, we obtain the desired estimate.

\subsection*{Weak type  estimate for  $(1/p,1/q)\in (\mathfrak B,\mathfrak C]$}  
Recalling \eqref{decom1},  we first handle $\sum_{j\ge 3} \pj [\psi^\pm_j]$. To obtain the weak type $(p,q)$ estimate for $\pj$, we prove 
\begin{equation}\label{tem_goal} \| \sum_{j\ge 3} \pj [\psi^\pm_j] \|_{L^p\to L^{q,\infty}} \lesssim  {\mu}^{ d(\frac1p+\frac1q)-\frac{2d+1}2}\end{equation}
 for $(1/p,1/q)\in (\mathfrak B,\mathfrak C].$  
Since $  \sum_{2^{-j}\lesssim {\mu}^{-1}}\psi(2^jt) = \zeta({\mu} t) $ for some $\zeta\in C^\infty_c((0,\pi/2))$, 
we may write 
\begin{align*} 
\sum_{j\ge 3} \pj [\psi^\pm_j] 
= \pj [\zeta(\pm\mu\,\cdot)]+\sum_{{2^j\lesssim {\mu}}} \pj [\psi^\pm_j].
\end{align*}
 From Lemma \ref{local}, we have   $ \|\pj [\psi_j^\pm]\|_{1\to 2}\lesssim 2^{-j/2}{\mu}^{\frac{d-1}2}$  
   for $2^j\lesssim {\mu}$. 
Interpolation between this estimate and  \eqref{TT*} with $(1/p,1/q)=\mathfrak B$ and $(1,0)$, we obtain 
\[ \|\pj [\psi_j^\pm]\|_{p\to q} \lesssim 2^{j2d(\frac{2d-1}{4d}-\frac1q)}{\mu}^{ d(\frac1p+\frac1q)- \frac{2d+1}{2}}\]
for $2^j\lesssim {\mu}$ whenever $(1/p,1/q)$ is in the closed triangle $\mathcal T(d)$ with vertices $(1,1/2)$, $(1,0),$ and $\mathfrak B.$ Thus, choosing $q_0<\frac {4d}{2d-1}< q_1$ such that $(1/p, 1/q_0),$ $(1/p, 1/q_1) \in \mathcal T(d)$  and using $(a)$ in Lemma \ref{s-trick}, we obtain 
\begin{equation}\label{big}
 \| \sum_{2^j\lesssim {\mu}} \pj[\psi_j^\pm]\|_{L^p\to L^{q,\infty}} \lesssim  {\mu}^{ d(\frac1p+\frac1q)- \frac{2d+1}{2}}\end{equation}
for any $(1/p,1/q)\in (\mathfrak B,\mathfrak C].$ 

 We now handle $\pj [\zeta(\pm\mu\,\cdot)]$. 
Since $  \sum_{2^{-j}\lesssim {\mu}^{-1}}\psi(2^jt) = \zeta({\mu} t) $, by \eqref{TT*} and $(c)$ in Lemma \ref{s-trick}  we have the restricted-weak type $(p_\circ, q_\circ)$ estimate for $\pj [\zeta(\pm\mu\,\cdot)]$ with $(1/p_\circ,1/q_\circ)=\mathfrak B$: 
\[ \| \pj [\zeta(\pm\mu\,\cdot)] \|_{L^{p_\circ,1}\to L^{q_\circ,\infty}} \lesssim {\mu}^{-\frac12(\frac1{p_\circ}-\frac1{q_\circ})}={\mu}^{-\frac1{2d+1}}.\]
Interpolating this and the  estimates  $\|\pj [\zeta(\pm\mu\,\cdot)] \|_{1\to 2}\lesssim {\mu}^{\frac{d-2}2}$,  $\|\pj [\zeta(\pm\mu\,\cdot)]\|_{1\to\infty} \lesssim {\mu}^{d-1}$ which follow from Lemma \ref{local}, we obtain
\[ \|\pj [\zeta(\pm\mu\,\cdot)] \|_{p\to q}\lesssim  {\mu}^{ d(\frac1p-\frac1q)-1}\] 
whenever $(1/p,1/q)$ is in $\mathcal T(d)\setminus\{\mathfrak B\}.$ So, we get  
the estimate $
 \| \pj [\zeta(\pm\mu\,\cdot)]  \|_{p\to q}\lesssim  {\mu}^{ d(\frac1p+\frac1q)-\frac{2d+1}2}$
for $(1/p,1/q)\in(\mathfrak B,\mathfrak C]$ since $d(\frac1p-\frac1q)-1=d(\frac1p+\frac1q)-\frac{2d+1}2$ when $\frac1q=\frac{2d-1}{4d}$. Combining this with  \eqref{big}, we obtain \eqref{tem_goal} for $(1/p,1/q)\in(\mathfrak B,\mathfrak C].$

We now turn to $\sum_{k\ge5} \pj [\varphi_k]$ and $\pj[\varphi^0]$.  
Applying Lemma \ref{local}, we have  the estimate 
$\|\pj [\varphi_k]\|_{1\to 2}\lesssim 2^{-k/2}{\mu}^{\frac{d-1}2}$ for $2^k\lesssim {\mu}$ and  \eqref{TT*k}, especially, with $(1/p,1/q)=\mathfrak B, (1,0)$. We also have the restricted-weak type $(p,q)$ estimate for $\sum_{2^{-k}\lesssim {\mu}^{-1}}\pj[\varphi_k ]$ at $(p,q)=\mathfrak B$. 
Thus, in the same manner as before we can obtain 
\Be
\label{phi-k}  \|\sum_{k\ge5} \pj [\varphi_k] \|_{L^p\to L^{q,\infty}} \lesssim  {\mu}^{ d(\frac1p+\frac1q)-\frac{2d+1}2}
\Ee
for $(1/p,1/q)\in (\mathfrak B,\mathfrak C].$   Finally,  we have $\|\pj[\varphi^0]\|_{1\to \infty}\lesssim {\mu}^{-\frac12}$, $\|\pj[\varphi^0]\|_{p_\circ \to q_\circ}\lesssim {\mu}^{-\frac1{2d+1}}$ at $(1/{p_\circ},1/{q_\circ})=\mathfrak B$ from \eqref{TT*0}, and $\|\pj[\varphi^0]\|_{1\to 2}\lesssim {\mu}^{\frac{d-1}2}$ from Lemma \ref{local}. Thus, interpolation gives 
\Be
\label{phi-0}  \|\pj[\varphi^0]\|_{p\to q} \lesssim {\mu}^{d(\frac1p+\frac1q)-\frac{2d+1}2}
\Ee
for $(1/p,1/q)\in [\mathfrak B,\mathfrak C]$. 

Combining the estimates \eqref{tem_goal},  \eqref{phi-k}, and \eqref{phi-0}  together with  \eqref{decom1}, we get the desired weak type $(p,q)$  estimate for $\pj$ when $(1/p,1/q)\in (\mathfrak B,\mathfrak C]$.

\section{Proof of Theorem \ref{main} : Sharpness  }

In this section we show  the estimate \eqref{est-main}  is sharp and the failure of \eqref{est-main} for
$(1/p,1/q)\in [\mathfrak B, \mathfrak C]\cup[\mathfrak B', \mathfrak C']$. 

\begin{prop}\label{sharp} Let $d\ge1$ and $1\le p\le 2\le  q\le \infty$. For  ${\mu} $  large enough,  there is a constant $C,$ independent of ${\mu}$, such that 
\begin{align}
\| \pj \|_{p\to q} &\ge C {\mu}^{-\frac12(\frac1p-\frac1q)},\label{R1} \\
\| \pj \|_{p\to q} &\ge C {\mu}^{d(\frac1p-\frac1q)-1},\label{R3} \\
\| \pj \|_{p\to q} &\ge C {\mu}^{ \frac{2d-1}2-d(\frac1p+\frac1q)}. \label{R2}
\end{align}
\end{prop}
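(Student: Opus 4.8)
The plan is to establish the three lower bounds \eqref{R1}, \eqref{R3}, \eqref{R2} by testing the operator $\pj$ against suitably chosen inputs and measuring the output, exploiting the explicit twisted-convolution representation \eqref{pro_tw} and the $L^2$-normalized eigenfunctions. The first bound \eqref{R1} should follow from a single normalized eigenfunction: taking $f$ to be a special Hermite function $\Phi_{\alpha,\beta}$ with $2|\beta|+d=\mu$, we have $\pj f = f$, so $\|\pj\|_{p\to q}\ge \|f\|_q/\|f\|_p$. Using the known pointwise/$L^r$ asymptotics of a single special Hermite function (equivalently, of $\Phi_\alpha$ on $\R^d$, via the Fourier--Wigner transform), one reads off $\|\Phi_{\alpha,\beta}\|_r \sim \mu^{c(r)}$ and the exponent arithmetic yields the factor $\mu^{-\frac12(\frac1p-\frac1q)}$; alternatively, one can simply invoke the sharpness of the known $L^2$--$L^q$ bound \eqref{known} together with the trivial relation $\|\pj\|_{p\to q}\ge \mu^{-(\frac1p-\frac12)}\|\pj\|_{2\to q}$ coming from testing against an $L^2$-normalized eigenfunction concentrated appropriately. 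The cleanest route is the Knapp-type / coherent-state example: an $L^2$-normalized function microlocalized so that $\pj$ acts nearly as the identity on it, which is standard for spectral projections of elliptic operators and mirrors \cite[Section~3.3]{JLR}.

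For \eqref{R3}, the relevant example is the ``focusing'' or kernel-diagonal example: take $f$ to be (a smooth bump version of) a point mass, or more precisely test the kernel $\varsigma_k$ near the origin. Since $\pj f = (2\pi)^{-d} f\times \varsigma_k$ and $\varsigma_k(0)=L_k^{d-1}(0)\sim k^{d-1}\sim \mu^{d-1}$, choosing $f$ to be an $L^p$-normalized bump of width $\sim \mu^{-1/2}$ (the natural scale, since $\varsigma_k$ oscillates and decays on the ball of radius $\sim\sqrt\mu$ after rescaling) concentrated near $0$ makes $\pj f$ comparable to $\mu^{d-1}$ on a ball of radius $\sim\mu^{-1/2}$; computing $\|f\|_p\sim \mu^{-d/p}$ (if the bump has unit height and width $\mu^{-1/2}$... here one must track the normalization carefully) and $\|\pj f\|_q$ from the lower bound on the plateau of $\varsigma_k$ gives $\mu^{d(\frac1p-\frac1q)-1}$ after collecting the powers. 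This is exactly the example that saturates the ${\mathlarger \wp}_k$ bound in the region $\ol{\mathcal R}_3$, so I would model the computation on the corresponding Euclidean spectral projection computation.

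For \eqref{R2}, the example should interpolate between the two previous mechanisms, or rather exploit the Knapp example from the $L^2$-theory: the function saturating \eqref{known} at the endpoint $q=\frac{2(2d+1)}{2d-1}$ is a tube/plate-type eigenfunction, and testing $\pj$ against such an input both from the $L^p$ side and reading the $L^q$ output produces the exponent $\frac{2d-1}{2}-d(\frac1p+\frac1q)$; equivalently, \eqref{R2} at a single point (say $\ppq=\mathfrak B$) combined with the known $L^2$--$L^q$ sharpness and interpolation/duality yields it on the whole relevant segment. The main obstacle I anticipate is purely bookkeeping: getting the three normalization exponents exactly right for each test function --- in particular, making sure the chosen widths ($\mu^{-1/2}$ in physical space, the plate dimensions $\mu^{-1/2}\times\cdots\times\mu^{-1/2}\times 1$ for the Knapp example) are the genuinely optimal scales and that the lower bounds on the plateau values of $\varsigma_k$ and on the eigenfunction concentration are honest rather than merely heuristic. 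Once the right test functions are fixed, each of \eqref{R1}--\eqref{R3} reduces to a one-line exponent count; the necessity of failure on $[\mathfrak B,\mathfrak C]\cup[\mathfrak B',\mathfrak C']$ will then follow by combining \eqref{R3} and \eqref{R2}, whose exponents agree along $\frac1q=\frac{2d-1}{4d}$ while both strictly exceeding $\varrho(p,q)$ there --- giving a contradiction with any strong-type bound at those points.
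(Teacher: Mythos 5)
Your sketch for \eqref{R3} (a bump of width $\mu^{-1/2}$ at the origin, using $\varsigma_k(0)=L_k^{d-1}(0)\sim\mu^{d-1}$ and the stability of $\varsigma_k$ on that scale) is sound and does produce the exponent $d(\frac1p-\frac1q)-1$; this is an elementary, self-contained alternative to the paper's route. The paper itself proves \eqref{R1} and \eqref{R3} by a different mechanism: a contradiction argument that starts from the Koch--Ricci sharpness of $\|\pj\|_{q'\to q}\gtrsim\mu^{\varrho(q',q)}$ and uses duality plus Riesz--Thorin. If $\|\mathcal P_{\mu_k}\|_{p\to q}=c_k\mu_k^{\varrho(p,q)}$ with $c_k\to0$, then by duality $\|\mathcal P_{\mu_k}\|_{q'\to p'}=c_k\mu_k^{\varrho(p,q)}$, and interpolating along the segment $[(1/p,1/q),(1/q',1/p')]$ forces the same decay at the midpoint $(1/r,1/r')$ on the duality line, contradicting the known lower bound there. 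This ``push to the duality line'' is what makes the argument work without any explicit test function in $\mathcal R_1$, and it is a genuinely different idea from anything in your proposal.

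Your treatment of \eqref{R1} has a real gap. Testing with a single special Hermite eigenfunction $\Phi_{\alpha,\beta}$ only gives $\|\pj\|_{p\to q}\ge\|\Phi_{\alpha,\beta}\|_q/\|\Phi_{\alpha,\beta}\|_p$, and there is no reason this ratio should equal $\mu^{-\frac12(\frac1p-\frac1q)}$: for an eigenfunction spread over a ball of radius $\sim\sqrt\mu$ in $\R^{2d}$ the ratio behaves like $\mu^{d(\frac1q-\frac1p)}$, which is far \emph{too small} (and hence proves nothing) once $p<q$. The ``trivial relation'' $\|\pj\|_{p\to q}\ge\mu^{-(\frac1p-\frac12)}\|\pj\|_{2\to q}$ is not a relation between operator norms; the concentration argument you allude to gives at best a factor $M^{\frac12-\frac1p}$ where $M$ is the measure of the concentration set of the extremizer, and in the regime near $(1/2,1/2)$ that set is not small. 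So this does not yield \eqref{R1}. Your proposal for \eqref{R2} is also unsubstantiated: ``interpolation/duality from a single point'' does not propagate a \emph{lower} bound, and the Knapp/plate heuristic is not worked out. The paper instead builds the test function explicitly from Muckenhoupt's Laguerre asymptotics (Lemma~\ref{asymp}): $f=\sum_j\chi_{D_j}(|z|)\varsigma_k(z)$ supported on $\sim\mu$ thin annuli of width $\sim\mu^{-1/2}$ inside $|z|\sim\sqrt\mu$ where $|\cos g|$ is bounded below, so that $\pj f=(2\pi)^{-d}f\times\varsigma_k$ is $\gtrsim\mu^{d-1}$ near the origin while $\|f\|_p\lesssim\mu^{-1/2+d/p}$. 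You would need an equally explicit construction to make \eqref{R2} rigorous.

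Finally, your closing claim about the failure of the strong-type estimate on $[\mathfrak B,\mathfrak C]\cup[\mathfrak B',\mathfrak C']$ is mistaken. On that segment $\varrho(p,q)=d(\frac1p-\frac1q)-1$, and your lower bounds match this exponent rather than exceed it, so there is no contradiction with a strong-type bound $\|\pj\|_{p\to q}\lesssim\mu^{\varrho(p,q)}$. The paper establishes the failure by a separate transplantation argument: scaling, H\"ormander's asymptotics for the spectral kernel of an elliptic operator, and a limiting procedure that reduces the hypothetical bound to a $2d$-dimensional restriction--extension inequality known to fail at those exponents. None of this is recoverable from the three lower bounds alone.
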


\begin{proof}[Proof of Theorem \ref{main}: Sharpness] By duality and \eqref{R2} we obtain
\[ \|\pj\|_{p\to q} \ge C{\mu}^{d(\frac1p+\frac1q)-\frac{2d+1}2}\]
for any $q\ge 2.$ Combining this and the estimates in Proposition \ref{sharp}, we obtain
\[ \|\pj\|_{p\to q}\ge C{\mu}^{\varrho(p,q)}\]
 for $1\le p\le 2\le q\le \infty$. 

The failure of the strong type estimate \eqref{est-main} for $p,q$ satisfying $(1/p,1/q)\in [\mathfrak B,\mathfrak  C]\cup[\mathfrak B', \mathfrak  C']$ can be shown by using the $L^p$--$L^q$ transplantation argument in \cite[Lemma 3.5]{JLR} (see the paragraph below Lemma 3.5 of \cite{JLR})  because  the twisted Laplacian $L$ is also an elliptic operator on $\R^{2d}$. To do show,  we define an projection operator $P$ by
\[P=\sum_{kn\le {\mu}\le (k+1)n}\pj \]
for large $k$, $n>0$, and set $P(z,z')$ the kernel of $P$. If we assume that $ \|\pj\|_{p\to q}\lesssim {\mu}^{d(\frac1p-\frac1q)-1} $, by the triangle inequality, we have
\[\|P\|_{p\to q} \lesssim k^{d(\frac1p-\frac1q)-1}n^{d(\frac1p-\frac1q)}.\]
This implies 
\begin{equation}\label{dual}
 n^d\Big| \iint P(z,z')f(n^{1/2}z')g(n^{1/2}z) dzdz' \Big| \lesssim  k^{d(\frac1p-\frac1q)-1}\|f\|_p\|g\|_{q'}
\end{equation}
for $f,g\in C^\infty_c(\R^{2d})$.
Let $f$ and $g$ be supported in a ball of radius $r$. If $z'$ and $z$ are  in the support of $f(n^{1/2}\cdot)$ and $g(n^{1/2}\cdot)$, respectively, $z-z'$ is in a ball of radius $2rn^{-1/2}$, hence $|z-z'|$ is  small enough if $n$ is sufficiently large. Applying H\"ormander's theorem \cite[Theorem 5.1]{H68} (see also Theorem 3.6 of \cite{JLR}) we see that 
\[ P(z,z') =(2\pi)^{-2d} \int_{kn\le|\xi|^2\le (k+1)n} e^{i\psi(z,z',\xi)} d\xi +\mathcal E(z,z',k,n)\]
where $\psi(z,z',\xi)=\langle (x-x',y-y'),\xi\rangle +O(|z-z'|^2|\xi|)$ with $z=x+iy, z'=x'+y'$, and $\mathcal E(z,z',k,n)=O(|kn|^{(2d-1)/2})$.  Thus, by rescaling $(z, z')\to (n^{-1/2}z,n^{-1/2}z')$  we see that 
the estimate \eqref{dual} implies 
\begin{align*}
 \Big| \iint \Big( \int_{k\le|\xi|^2\le (k+1)}& e^{i\psi(n^{-\frac12}(z, z'),n^{\frac12}\xi)} d\xi + O(k^{\frac{2d-1}2}n^{-\frac12})\Big)f(z')g(z) dzdz' \Big| \\
 &\lesssim  k^{d(\frac1p-\frac1q)-1}\|f\|_p\|g\|_{q'}.
\end{align*}
Letting $n$ to $\infty$, this yields
\begin{align*}
 \Big|\iint  \int_{k\le|\xi|^2\le (k+1)} e^{i \langle (x-x',y-y'),\xi\rangle } d\xi f(z')g(z) dzdz' \Big|\lesssim  k^{d(\frac1p-\frac1q)-1}\|f\|_p\|g\|_{q'}
\end{align*}
for any $f,g\in C^\infty_c(\R^{2d}),$ which is equivalent to 
\begin{align*}
  \Big\| \frac1{(2\pi)^{2d}} \int_{k\le|\xi|^2\le (k+1)} e^{i \langle (x,y),\xi\rangle } \widehat f(\xi) d\xi \Big\|_q\lesssim  k^{d(\frac1p-\frac1q)-1}\|f\|_p
\end{align*}
for any $f,g\in C^\infty_c(\R^{2d}).$ After scaling and letting $k\to \infty$, we obtain  the $2d$-dimensional restriction-extension estimate
\begin{equation}\label{re}
\Big\|\int_{\mathbb S^{2d-1} }\widehat f(\xi)e^{2\pi i (x,y)\cdot \xi}d\sigma(\xi)\Big\|_q\lesssim \|f\|_p. \end{equation}
It was already known that \eqref{re} is true only if $(1/p,1/q)\in {\mathcal R_3}$. {(See \cite{B86},\cite[Theorem 3.6]{JLR}).}
\end{proof}

\subsection*{Proof of the lower bounds \eqref{R1} and \eqref{R3}} We prove the estimates \eqref{R1} and \eqref{R3} by using duality argument and the known sharpness result obtained by Koch and Ricci \cite{KR}. In fact, we will use the fact that  there is a constant $C>0$, independent of ${\mu}$, such that for $2\le q\le \infty$
\begin{equation}\label{known_TT*} \|\pj\|_{q'\to q}\ge C{\mu}^{\varrho(q',q)},\end{equation}
which follows from  \eqref{known} and $TT^*$-argument.
Since $\varrho(p,q)=\max\{-\frac12(\frac1p-\frac1q),d(\frac1p-\frac1q)-1,  \frac{2d-1}2-d(\frac1p+\frac1q), d\big(\frac1p+\frac1q\big) -\frac{2d+1}{2}\},$ it is enough to show that \eqref{R1} on $(\frac1p,\frac1q)\in\mathcal R_1$ and \eqref{R3} on $(\frac1p,\frac1q)\in\mathcal R_3$.

We only show \eqref{R1} since the same argument works for \eqref{R2}. 
We prove  \eqref{R1} by contradiction. Suppose 
\eqref{R1} fails for some $p, q$ with $p\neq q'$,  then there are sequences $c_k$ and  $\mu_k$ such that 
\[ \|\mathcal P_{\mu_k} \|_{p\to q}= c_k{\mu}_k^{\varrho(p,q)},\]
$\mu_k\to \infty$, and $c_k\to 0$ as $k\to \infty$. Then,   by duality we also have $\|\mathcal P_{{\mu_k}} \|_{q'\to p'}= c_k{\mu}_k^{\varrho(p,q)}$. Interpolation between these two estimates we get 
\[ \|\mathcal P_{\mu_k} \|_{r\to s}\le c_k{\mu}_k^{\varrho(r,s)}  \] 
for all $r,s$ satisfying $(1/r,1/s)\in [(1/p,1/q), (1/q',1/p')]$ { with $1/r-1/s=1/p-1/q$.}  In particular we get 
$ \|\mathcal P_{\mu_k} \|_{r\to r' }\le c_k{\mu}_k^{\varrho(r,r')}$. 
This contradict \eqref{known_TT*} because $c_k\to 0$ as $k\to \infty$.

\subsection*{Proof of \eqref{R2}} We make use of the  formula  \eqref{pro_tw} 
where the twisted kernel $\varsigma_k$  is given by the Laguerre function. In fact,  let $\mathcal L^\alpha_k(t), t\ge0,$ denote the normalized Laguerre function of type $\alpha$  given by
\[\mathcal L^\alpha_k(t) = \Big(\frac{k!}{\Gamma(k+\alpha+1)}\Big)^{1/2} t^{\alpha/2} e^{-t/2}L^\alpha_k(t).\]
We clearly have  
\[\varsigma_k(z) =(\frac{k!}{(k+d-1)!})^{-1/2}|z|^{-({d-1})} \CL^{d-1}_k(|z|^2/2).\] 
There is a large body of literature concerning 
the asymptotic behavior of the Laguerre functions. 
We refer the reader to \cite{olv, FW, QW} and references therein. 
However,  for our purpose  we use the following relatively simple asymptotic formula.

\begin{lemma}\label{asymp}\cite[p.422]{Mu1}
Let $\alpha \ge 0$ and $k\in \N$. Then  
\[ \CL^\alpha_k(t) =\Big(\frac 2\pi\Big)^{1/2} \frac{(-1)^k}{t^{1/4}(\nu-t)^{1/4}}\cos\Big( \frac{ \nu(2\theta -\sin 2\theta)-\pi}{4} \Big)+O\Big(\frac{\nu^{1/4}}{(\nu-t)^{7/4}}+(\nu t)^{-3/4}\Big), 
\]
where $\nu= 4k+2\alpha+2$, $0<t<\nu$, and $\theta= \cos^{-1}(t^{1/2}\nu^{-1/2}).$
\end{lemma}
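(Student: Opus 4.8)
The stated formula is a classical Liouville--Green (WKB) asymptotic for Laguerre functions, and I would obtain it from the differential equation, with the error terms controlled by Olver's theory of the Liouville--Green approximation (see \cite{olv}); here $\nu=4k+2\alpha+2$ plays the role of the large parameter. First I would pass to the normal form: starting from the Laguerre equation $t\,y''+(\alpha+1-t)y'+ky=0$ for $y=L_k^\alpha(t)$, the substitution $y=t^{-(\alpha+1)/2}e^{t/2}W$ kills the first-order term and gives $W''+Q(t)W=0$ with $Q(t)=-\tfrac14+\tfrac{\nu}{4t}+\tfrac{1-\alpha^2}{4t^2}$, while $\CL_k^\alpha(t)=(k!/\Gamma(k+\alpha+1))^{1/2}\,t^{-1/2}W(t)$. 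Rescaling $t=\nu u$ turns this into $\ddot W+(\nu^2 f(u)+g(u))W=0$ with $f(u)=\tfrac{1-u}{4u}$ and $g(u)=\tfrac{1-\alpha^2}{4u^2}$; thus $f$ has a simple turning point at $u=1$ (that is, $t=\nu$) and a Bessel-type double pole at $u=0$ ($t=0$) --- precisely the two points where the error terms $\nu^{1/4}(\nu-t)^{-7/4}$ and $(\nu t)^{-3/4}$ in the statement blow up.

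On every subinterval of $(0,1)$ bounded away from both endpoints --- equivalently, on the $t$-range where both error terms are $o(1)$ --- the Liouville--Green theorem furnishes a fundamental pair $W_{\pm}=(\nu^2f)^{-1/4}\exp\big(\pm i\nu\int_u^1\sqrt{f(v)}\,dv\big)(1+\eta_\pm)$, with $|\eta_\pm|$ bounded by $\nu^{-1}$ times the total variation of the associated error-control function (built from $f^{-1/4}(f^{-1/4})''$ and $g/\sqrt f$). A direct estimate of this total variation, transported back through $u=t/\nu$, reproduces exactly the order $\nu^{1/4}(\nu-t)^{-7/4}+(\nu t)^{-3/4}$ (the first term from the turning point at $u=1$, the second from the singularity at $u=0$); one also notes that the elementary replacement $2^\alpha(\Gamma(k+\alpha+1)/k!)^{1/2}\nu^{-\alpha/2}=1+O(\nu^{-1})$ is harmless at the same level. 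Evaluating the ingredients, the substitution $v=\cos^2\phi$ gives $\nu\int_u^1\sqrt{f(v)}\,dv=\tfrac{\nu}{4}(2\theta-\sin2\theta)$ with $\theta=\cos^{-1}(t^{1/2}\nu^{-1/2})$, and undoing the scaling gives $(\nu^2f(u))^{-1/4}=\sqrt2\,\nu^{-1/2}t^{1/4}(\nu-t)^{-1/4}$, so that after reinstating the prefactor $(k!/\Gamma(k+\alpha+1))^{1/2}t^{-1/2}$ one arrives at
\[
\CL_k^\alpha(t)=\frac{c_{k,\alpha}}{t^{1/4}(\nu-t)^{1/4}}\,\cos\!\Big(\tfrac{\nu(2\theta-\sin2\theta)}{4}+c_0\Big)+(\mathrm{error}),
\]
for a constant $c_{k,\alpha}$ and a phase constant $c_0$ yet to be identified.

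The two constants come from connection formulas. Since $L_k^\alpha$ is a polynomial, $W(t)=t^{(\alpha+1)/2}e^{-t/2}L_k^\alpha(t)$ decays as $t\to\infty$, so it is the recessive solution in the forbidden region $t>\nu$ past the turning point; the Airy connection formula then forces the oscillatory combination to be $\cos\big(\nu\int_u^1\sqrt f-\tfrac\pi4\big)$, i.e.\ $c_0=-\pi/4$, which is the claimed $\cos\big(\tfrac{\nu(2\theta-\sin2\theta)-\pi}{4}\big)$. To fix $c_{k,\alpha}$ I would match, in the overlap range $1\ll\sqrt{\nu t}\ll\nu$, to the behaviour near $u=0$: there the Liouville substitution $s=\sqrt{\nu t}$ reduces the equation to Bessel's equation of order $\alpha$, giving $W\approx C\,\nu^{-1/2}t^{1/2}J_\alpha(\sqrt{\nu t})$; the value of $C$ is read off by letting $s\to0$ and comparing with $L_k^\alpha(0)=\binom{k+\alpha}{k}$, and inserting the large-argument expansion $J_\alpha(s)\sim\sqrt{2/(\pi s)}\cos(s-\tfrac{\alpha\pi}{2}-\tfrac\pi4)$ then reproduces the stated constant $\sqrt{2/\pi}$. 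The sign $(-1)^k$ drops out because $\nu\int_0^1\sqrt f=\tfrac{\nu\pi}{4}=k\pi+\tfrac{(\alpha+1)\pi}{2}$, so $\cos\big(\nu\int_u^1\sqrt f-\tfrac\pi4\big)=(-1)^k\cos(\sqrt{\nu t}-\tfrac{\alpha\pi}{2}-\tfrac\pi4)$ to leading order as $u\to0$; uniqueness of the recessive solution makes the determinations from the two ends consistent. This settles the formula for $\nu$ large, the remaining small values of $k$ being elementary.

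The main obstacle is making the Liouville--Green remainder genuinely uniform in $k$ (and locally uniform in $\alpha$) with the precise blow-up rates $(\nu-t)^{-7/4}$ and $(\nu t)^{-3/4}$: this requires carefully tracking the error-control function through both the rescaling $t\mapsto\nu u$ and the Bessel substitution $s=\sqrt{\nu t}$, and arranging the Liouville--Green region, the turning-point region at $t=\nu$, and the Bessel region at $t=0$ to overlap so that the connection constants propagate. A self-contained alternative that pins down all constants at once is the complex saddle-point method applied to the Cauchy integral $L_k^\alpha(t)=\frac1{2\pi i}\oint(1-w)^{-\alpha-1}w^{-k-1}e^{-tw/(1-w)}\,dw$: the two conjugate saddles lie on $|w|=1$ at $w=-e^{\pm2i\theta}$, their contributions combine to the stated cosine with amplitude $\big(t^{1/4}(\nu-t)^{1/4}\big)^{-1}$, and the factor $(1-w)^{-\alpha-1}$ only affects the lower-order correction; the cost there is justifying the deformation onto the steepest-descent paths and a somewhat more delicate remainder bound.
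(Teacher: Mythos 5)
The paper does not prove this lemma at all: it is imported verbatim from Muckenhoupt \cite[p.~422]{Mu1} (who in turn quotes Erd\'elyi's uniform asymptotics for Laguerre polynomials), so there is no in-paper argument to compare against. Your sketch is a correct outline of the standard derivation behind that cited formula, and the quantitative details check out: the normal form $W''+\big(-\tfrac14+\tfrac{\nu}{4t}+\tfrac{1-\alpha^2}{4t^2}\big)W=0$ with $W=t^{(\alpha+1)/2}e^{-t/2}L_k^\alpha$ is right; the substitution $v=\cos^2\phi$ gives $\int_u^1\sqrt{f}=\int_0^\theta\sin^2\phi\,d\phi=\tfrac14(2\theta-\sin2\theta)$; $(\nu^2f)^{-1/4}=\sqrt2\,\nu^{-1/2}t^{1/4}(\nu-t)^{-1/4}$ combines with the prefactor $t^{-1/2}$ to give the stated amplitude; the recessive-solution Airy connection yields the $-\pi/4$ phase; and $\nu\pi/4=k\pi+\tfrac{(\alpha+1)\pi}{2}$ correctly produces both the $(-1)^k$ and the match with $J_\alpha(\sqrt{\nu t})\sim\sqrt{2/(\pi s)}\cos(s-\tfrac{\alpha\pi}{2}-\tfrac\pi4)$, fixing the constant $\sqrt{2/\pi}$. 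Your back-of-envelope error-control computation also reproduces the right orders: near $u=1$ the variation of $f^{-1/4}(f^{-1/4})''$ gives $\nu^{-1}(1-u)^{-3/2}$ times the amplitude, i.e.\ $\nu^{1/4}(\nu-t)^{-7/4}$, and near $u=0$ the term $g/\sqrt f\sim u^{-3/2}$ gives $(\nu t)^{-1/2}$ times the amplitude, i.e.\ $(\nu t)^{-3/4}$. The one genuinely nontrivial point is the one you flag yourself --- making the Liouville--Green remainder uniform down to the two singular endpoints and propagating the connection constants through the overlapping Bessel, LG, and Airy regions --- and that is precisely what the cited sources (Erd\'elyi, Olver, as reproduced by Muckenhoupt) supply in full; as written, your text is a proof outline rather than a complete proof, but it is the correct outline. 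One cosmetic slip: the ``double pole at $u=0$'' belongs to $g(u)=(1-\alpha^2)/(4u^2)$, while $f(u)=(1-u)/(4u)$ has only a simple pole there; together they form the Bessel-type singularity in Olver's classification, which is what your matching via $J_\alpha(\sqrt{\nu t})$ implicitly uses.
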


Recalling ${\mu}=2k+d,$ from   Lemma \ref{asymp} we have 
\begin{align}
\label{pj-asym} 
 \varsigma_k(z) = \Big(\frac 2\pi & \frac{(k+d-1)!}{k!}\Big)^{1/2}\frac{(-1)^k}{|z|^{{d-1}}}  \\ &\times \Big\{ ( |z|^2({\mu}-2^{-2}|z|^2))^{-1/4}\cos g(|z|)+O\big( \mu^{-3/2}\big)\Big\}
 \nonumber
 \end{align}
for $\sqrt\mu/8 \le |z|\le \sqrt\mu/2$, where 
\[ g(s) = \frac{\mu}2\Big(2\theta(s) -\sin 2\theta(s)\Big)-\frac\pi4,\quad \theta(s) =\cos^{-1}\Big(\frac{s}{2\sqrt\mu}\Big).\]

Note that $g$ is monotone decreasing and $\{g(t) : \sqrt\mu/8\le t\le \sqrt\mu/3 \}$ is an interval of length $\sim \mu$.  So,
 there exist  $\sqrt\mu/8<t_1<t_2<\cdots<t_N<\sqrt\mu/3$, $N\sim \mu$, such that $|\cos g(t_j)|=1$ for all $j$.\footnote{In fact, $g(t_j)=\pi(j+n_0)$ for some integer $n_0$.} Since $|g'(t)|\sim \sqrt{\mu}$, $t_{j+1}-t_j \sim 1/\sqrt\mu$ for all $j$. Also, $|\cos g(t)|\ge \cos(\pi/4)>0$ whenever $|t-t_j|\le \pi/(8\sqrt\mu)$.

To prove  \eqref{R2}, we set $D_j := [t_j, t_j +\pi/(8\sqrt\mu)]$, $1\le j\le N,$ and define $f$ on $\C^d$ by
\[ f(z):=\sum_{j=1}^N \chi_{D_j }(|z|) \varsigma_k(z).\]
Since $\supp f\subset B(0,\sqrt\mu/2)\setminus B(0,\sqrt\mu/8),$ it is easy to see that
\begin{align*}
\int_{\C^d}|f(z)|^p dz
&\lesssim \int_{\sqrt\mu/8}^{\sqrt{\mu}/2} {\mu}^{\frac{2d-3}4 p}\,r^{-(d-\frac12)p+2d-1} dr \lesssim {\mu}^{-p/2+d},
\end{align*}
where we use $-(d-\frac12)p+2d-1\ge0$, since $p\le 2$.

We now observe $\pj f$ near the origin. For $|z|\le \pi/(32\sqrt{\mu})$ and $w\in \C^d$ satisfying $|z-w|\in D_j,$  we have $ |w|\in [t_j -\pi/(8\sqrt\mu),t_j +5\pi/(32\sqrt\mu)]$ and $|\cos g(|w|)|\ge c>0$ for some $c>0$ independent of $\mu$.
This yields  $\varsigma_k(z-w)\varsigma_k(w)>0$ on $|z- w|\in D_j$ and $|z|\le \pi/(32\sqrt{\mu})$ if   $k$ is large enough. Thus, if ${\mu} $ is large enough,  for  $|z|\le \pi/(32\sqrt{\mu})$ we obtain  
\begin{align*}
|\pj f(z)|
&\ge (2\pi )^{-d} \sum_{j=1}^{N} \Big| \re \Big(\int_{\C^d} \chi_{D_j}(|z-w|) \varsigma_k(z-w) \varsigma_k(w) e^{i\frac12\im z\cdot \overline w} dw\Big)\Big|\\
&\gtrsim \sum_{j=1}^{N} {\mu}^{d-1-1/2} \int_{\C^d} \chi_{D_j}(|z-w|) |z-w|^{-(d-1)-1/2} |w|^{-(d-1)-1/2}  dw \\
&\gtrsim \sum_{j=1}^{N} {\mu}^{d-3/2} \int_{ t_j+\frac{\pi}{32\sqrt \mu}}^{ t_j+\frac{3\pi}{32\sqrt\mu}}  r^{-2(d-1)-1 +2d-1}  dr \sim {\mu}^{d-1}, 
\end{align*}
where the implicit constant is independent of ${\mu}.$ Indeed,  we used Stirling's formula to show $\frac{(k+d-1)!}{k!}\sim k^{d-1}\sim {\mu}^{d-1}$ in the second line, and $N \sim {\mu}$.
Therefore, for $1\le p\le 2$  and ${\mu}$ large enough, we get
\begin{align*}
\|\pj\|_{p\to q} &\ge  \|\pj f\|_q/\|f\|_p \ge  \|\pj f\|_{L^q(|z|\le \pi/(32\sqrt{{\mu}}))}/\|f\|_p\\
&\gtrsim {\mu}^{-d/q +d-1} {\mu}^{-d/p+1/2} \sim {\mu}^{\frac{2d-1}2 -(\frac dp+\frac dq)},
\end{align*}
which gives the lower bound \eqref{R2}.

\section{Resolvent estimate for the twisted Laplacian}

In this section, we prove the uniform resolvent estimates for $L$ in Theorem \ref{resol}.  
Here we closely follow the argument for the Hermite resolvent estimate in \cite{JLR} where the main ingredients were the uniform bound for Hermite spectral projection,  a kind of mixed norm estimate for the Hermite-Schr\"odinger propagator, and $L^p$--$L^q$ boundedness of the fractional Hermite operator.  For the twisted Laplacian $L$, the fractional integral operator  $L^{-s}$, $s>0,$ is  given by
\[ L^{-s} =\sum_{{\mu}} {\mu}^{-s} \pj =\frac1{\Gamma(s)} \int_0^\infty e^{-tL}t^{s-1} ds.\]
The $L^p$--$L^q$ boundedness was already established by Nowak and Stempak \cite{NS}. 
\begin{thm}\cite{NS} \label{fractional}
Let $s>0$ and $1\le p\le q\le \infty$. If $s>d$,  $L^{-s}$ is bounded from $L^p(\C^d)$ to $L^q(\C^d)$ for any 
$1\le p\le q\le \infty$, and $L^{-d}$ is bounded from $L^p(\C^d)$ to $L^q(\C^d)$ if and only if $(p,q)\neq (1,\infty)$.
In addition, if $s<d$, then $L^{-s}$ is bounded from $L^p(\C^d)$ to $L^q(\C^d)$ if and only if 
\[\frac1p-\frac sd\le \frac1q\quad  \text{and} \quad \Big(\frac1p,\frac1q\Big)\not=\Big(\frac sd,0\Big),\,\,\,\Big(1,\frac{d-s}d\Big).\]
\end{thm}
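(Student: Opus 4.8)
The plan is to realize $L^{-s}$ as a twisted convolution and then reduce everything to ordinary fractional integration on $\R^{2d}$. From \eqref{pro_tw} together with the Laguerre generating identity $\sum_{k\ge0}L_k^{d-1}(r)w^k=(1-w)^{-d}e^{-rw/(1-w)}$ applied with $w=e^{-2t}$, the heat semigroup is a twisted convolution $e^{-tL}f=f\times q_t$ with the Mehler-type kernel $q_t(z)=(4\pi\sinh t)^{-d}e^{-\frac14(\coth t)|z|^2}$; integrating in $t$ gives $L^{-s}f=f\times K_s$, where
\[
K_s(z)=\frac1{\Gamma(s)}\int_0^\infty t^{s-1}q_t(z)\,dt .
\]
Because the twisting factor $e^{\frac i2\im z\cdot\overline w}$ has modulus one, $|f\times K_s|\le |f|\ast|K_s|$ pointwise, so it is enough to understand convolution with the nonnegative radial kernel $K_s$.

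First I would establish the pointwise behaviour of $K_s$. Splitting the defining integral at $t=1$ and using $\coth t\ge1$ everywhere, together with $\sinh t\sim e^t$ for $t\ge1$, produces a Gaussian tail $K_s(z)\lesssim e^{-c|z|^2}$ on $|z|\ge1$, uniformly in $s$. On $0<t<1$ one has $\sinh t\sim t$ and $\coth t\sim t^{-1}$, so after the substitution $t\mapsto|z|^2u$ the local part of $K_s(z)$ is comparable to $|z|^{2(s-d)}\int_0^{c|z|^{-2}}u^{s-1-d}e^{-1/(4u)}\,du$. Hence, for $|z|\le1$, $K_s(z)\lesssim|z|^{2(s-d)}$ when $s<d$, $K_s(z)\lesssim1+\log\tfrac2{|z|}$ when $s=d$, and $K_s$ is bounded when $s>d$; since $q_t>0$, keeping only the range $t\sim|z|^2$ already yields the reverse bounds $K_s(z)\gtrsim|z|^{2(s-d)}$ and $\gtrsim\log\tfrac2{|z|}$ near the origin.

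For the sufficiency I would dominate $K_s\lesssim\kappa_s+e^{-c|\cdot|^2}$ where $\kappa_s(z)=|z|^{2(s-d)}\chi_{\{|z|\le1\}}$ when $s<d$, $\kappa_s(z)=(1+\log^+\tfrac1{|z|})\chi_{\{|z|\le1\}}$ when $s=d$, and $\kappa_s\equiv0$ when $s>d$. Convolution with the Gaussian piece maps $L^p\to L^q$ for every $1\le p\le q\le\infty$ by Young's inequality, and $\kappa_s\in L^r(\R^{2d})$ exactly when $r<\frac d{d-s}$ (resp.\ every finite $r$ for $s=d$, every $r$ for $s>d$); Young's inequality then gives $L^p\to L^q$ in the open range $\frac1p-\frac1q<\frac sd$ for $s<d$, for all $(p,q)\ne(1,\infty)$ when $s=d$, and for all $1\le p\le q\le\infty$ when $s>d$. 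The critical line $\frac1p-\frac1q=\frac sd$ with $1<p<q<\infty$ is recovered from the Hardy--Littlewood--Sobolev fractional integration theorem, using $\kappa_s(z)\le|z|^{-(2d-2s)}$. For the necessity, comparing $\|\chi_{B(0,\e)}\|_p\sim\e^{2d/p}$ with $\|L^{-s}\chi_{B(0,\e)}\|_q\gtrsim\e^{2s+2d/q}$ (the twisting phase is $\approx1$ on $B(0,\e)$, and $K_s(w)\gtrsim|w|^{2(s-d)}$) forces $\frac1p-\frac1q\le\frac sd$ as $\e\to0$; the endpoint $(\tfrac1p,\tfrac1q)=(\tfrac sd,0)$ fails because $f(z)=|z|^{-2s}(\log\tfrac2{|z|})^{-1}\chi_{\{|z|\le1\}}\in L^{d/s}$ while $L^{-s}f(0)\gtrsim\int_{|w|\le1}|w|^{-2d}(\log\tfrac2{|w|})^{-1}\,dw=\infty$, and the dual endpoint $(1,\tfrac{d-s}d)$ fails by duality; and for $s=d$, $(p,q)=(1,\infty)$ fails since $\|L^{-d}(\e^{-2d}\chi_{B(0,\e)})\|_\infty\gtrsim\log\tfrac1\e\to\infty$ while the $L^1$ norm stays bounded.

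The main obstacle is the interface between the two halves of the argument along the scaling line $\frac1p-\frac1q=\frac sd$: one needs the sharp two-sided estimate $K_s(z)\asymp|z|^{-(2d-2s)}$ near the origin, both to apply Hardy--Littlewood--Sobolev on the positive side and to identify precisely the excluded endpoints $(\tfrac sd,0)$, $(1,\tfrac{d-s}d)$ (and $(1,\infty)$ when $s=d$). Once that pointwise description of $K_s$ is in hand, the remaining work is routine estimation of the defining $t$-integral and standard Young/fractional-integration bookkeeping.
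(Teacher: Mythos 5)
Your argument is correct, and it is essentially the standard route: the paper itself gives no proof of this theorem (it is quoted from Nowak--Stempak \cite{NS}, with only the subordination formula $L^{-s}=\frac1{\Gamma(s)}\int_0^\infty e^{-tL}t^{s-1}\,dt$ recorded), and \cite{NS} proceeds exactly as you do, by computing the Mehler-type twisted-convolution kernel of $e^{-tL}$, deriving the two-sided bounds $K_s(z)\asymp|z|^{2(s-d)}$ (resp.\ logarithmic, bounded) near the origin together with Gaussian decay at infinity, and then concluding via Young and Hardy--Littlewood--Sobolev plus the endpoint counterexamples. Your verification of all the claimed bounds and exclusions is complete; nothing further is needed.
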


We also need the mixed norm estimate for the Schr\"odinger propagator $e^{-itL}$ in a certain range of $p,q$ as follows.

\begin{prop}\label{pro_strong} Let $d\ge2$ and $Q$ be the closed quadrangle with vertices 
$(1/2,1/2)$, $\mathfrak D'$, $\mathfrak F'$, $((d+1)/2d, (d-1)/2d))$ from which the two points 
$\mathfrak F'$ and $((d+1)/2d, (d-1)/2d))$  are removed. If $(1/p,1/q)\in Q$, then 
\begin{equation}\label{strong}
 \Big\| \int_{-\pi/2}^{\pi/2} |e^{-it L} f| dt\Big\|_{q}\lesssim \|f\|_p\end{equation}
 and we also have restricted-weak type estimates if $(1/p,1/q)= \mathfrak F'$ or $((d+1)/2d, (d-1)/2d))$. 
\end{prop}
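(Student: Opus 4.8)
The plan is to follow the dyadic decomposition of the Schr\"odinger propagator $e^{-itL}$ already set up in Section 2, namely the partition $1 = \psi^0(t) + \sum_{j\ge3}(\psi^+_j(t)+\psi^-_j(t))$ together with the further splitting of $\psi^0$ near $t=\pm\pi/2$ via $\varphi^0$ and $\varphi_k$. Writing $\int_{-\pi/2}^{\pi/2}|e^{-itL}f|\,dt \le \sum_{\pm}\sum_{j\ge3}\int|\psi^\pm_j(t)e^{-itL}f|\,dt + \sum_{k\ge5}\int|\varphi_k(t)e^{-itL}f|\,dt + \int|\varphi^0(t)e^{-itL}f|\,dt$, it suffices to bound each piece in $L^q$ with a geometric gain in $j$ and $k$, and then sum. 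First I would record the two endpoint bounds for a single localized block: from the dispersive formula \eqref{dispersive}, on $\supp\psi^\pm_j$ one has $|\sin t|\sim 2^{-j}$ so the kernel is $O(2^{jd})$, giving $\|\int|\psi^\pm_j(t)e^{-itL}f|\,dt\|_\infty \lesssim 2^{j(d-1)}\|f\|_1$; on the other hand $\|\int|\psi^\pm_j(t)e^{-itL}f|\,dt\|_2 \lesssim 2^{-j}\|f\|_2$ by Minkowski and the $L^2$ isometry \eqref{iso}. For the blocks near $\pm\pi/2$ and $\varphi^0$, $|\sin t|\sim1$, so the kernel is $O(1)$ and one gets $\|\int|\varphi_k(t)e^{-itL}f|\,dt\|_\infty \lesssim 2^{-k}\|f\|_1$ and an $L^2\to L^2$ bound $\lesssim 2^{-k}$, hence these are harmless and sum trivially; the real content is the $\psi^\pm_j$ blocks.

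The key step is to upgrade the $L^1\to L^\infty$ estimate for a $\psi^\pm_j$ block to a better $L^{p_0}\to L^{q_0}$ estimate that, after interpolation with the $L^2\to L^2$ bound $2^{-j}$, produces a net exponent strictly negative in $j$ on the open quadrangle $Q$. Here I would exploit that $e^{-itL}$ localized to $|\sin t|\sim2^{-j}$ is, up to the harmless twisting factor $e^{\frac i2\im z\cdot\overline{z'}}$ and the Gaussian-type phase, essentially a rescaled Euclidean Schr\"odinger/averaging operator at scale $2^{-j}$; integrating in $t$ over an interval of length $\sim2^{-j}$ around a fixed $t_0$ and using stationary phase in $(t,z')$ jointly turns this into an operator of convolution type whose kernel decays like a fixed power of $|z-z'|$ at the relevant scale. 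Concretely, I expect $\|\int|\psi^\pm_j(t)e^{-itL}f|\,dt\|_{q_0} \lesssim 2^{j(2d(\frac1{p_0}-\frac1{q_0})-1-\varepsilon)}$-type bounds along the edge of $Q$ through $\mathfrak F'$ and $((d+1)/2d,(d-1)/2d))$, which is exactly the scaling line $1/p-1/q=1/d$ where the $j$-exponent is $0$ at the critical vertices and negative in the open edge. Interpolating this endpoint family with the $\infty$- and $2$-endpoints above, and then applying Lemma \ref{s-trick} (parts (a) and (c)) to sum the dyadic series, yields the strong estimate \eqref{strong} on the open quadrangle and the restricted-weak-type statements at $\mathfrak F'$ and $((d+1)/2d,(d-1)/2d))$, just as in the proof of Theorem \ref{main}.

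The main obstacle I anticipate is establishing the correct $L^{p_0}\to L^{q_0}$ bound for a single block $\int|\psi^\pm_j(t)e^{-itL}f|\,dt$ with the sharp power of $2^j$: because of the absolute value inside the $t$-integral one cannot simply integrate by parts in $t$, so one must first understand the kernel of $\psi^\pm_j(t)e^{-itL}$ pointwise — which by \eqref{dispersive} and \eqref{phi'}, \eqref{second-d} has a stationary point in $t$ precisely when $|z-z'|\sim2^{-j}$ — and then carry out the $t$-integration of the modulus by splitting into the region $|z-z'|\sim2^{-j}$ (where one loses a factor $2^{-j}\cdot 2^{j/2}$ from the stationary-phase width times amplitude, giving the Euclidean sphere-average kernel at scale $2^{-j}$) and the non-stationary region (where one gains). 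Matching this up with the Stein--Tomas / Bochner--Riesz type $L^p\to L^q$ mapping properties of the resulting Euclidean averaging operator in dimension $2d$, on the nose at the scaling line, is the delicate quantitative heart of the argument; everything else (the near-$\pi/2$ blocks, the summation, the passage to restricted-weak type) is routine given Lemma \ref{s-trick} and the machinery already developed in Sections 2 and 3.
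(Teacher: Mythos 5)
Your overall skeleton (dyadic decomposition in $t$, the block bounds $\|\int|\psi_j^\pm e^{-itL}f|dt\|_\infty\lesssim 2^{j(d-1)}\|f\|_1$ and $\|\int|\psi_j^\pm e^{-itL}f|dt\|_2\lesssim 2^{-j}\|f\|_2$, and summation via Lemma \ref{s-trick}) matches the paper. But the step you yourself flag as ``the delicate quantitative heart'' --- producing a third family of block estimates strong enough to reach $\mathfrak F'$ --- is exactly the step you do not carry out, and the route you sketch for it is both unnecessary and, as written, inconsistent. Interpolating only your two endpoints gives the exponent $d(\frac1p-\frac1q)-1$ on the duality line $p=q'$ alone, which vanishes only at the single point $(\frac{d+1}{2d},\frac{d-1}{2d})$; it cannot reach $\mathfrak F'$, which lies off the duality line on the edge $[\mathfrak D',(1,0)]$. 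The paper's missing ingredient is the endpoint Strichartz estimate \eqref{stri}, which follows from the dispersive bound \eqref{dispersive2} and the isometry \eqref{iso} via Keel--Tao (this is why $d\ge2$ is assumed): Cauchy--Schwarz in $t$ over the interval of length $\sim 2^{-j}$ gives the third endpoint $\|\int|\psi_j^\pm e^{-itL}f|dt\|_{2d/(d-1)}\lesssim 2^{-j/2}\|f\|_2$, and three-point interpolation then yields \eqref{psij} on the whole triangle $(1/2,1/2)$, $\mathfrak D'$, $(1,0)$, with exponent vanishing precisely on the segment of $\frac1p-\frac1q=\frac1d$ containing both $\mathfrak F'$ and $(\frac{d+1}{2d},\frac{d-1}{2d})$. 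No stationary-phase analysis of the block kernel, and no Stein--Tomas/Bochner--Riesz input, is needed.

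Two further concrete problems with your sketch. First, the exponent you ``expect,'' $2d(\frac1{p_0}-\frac1{q_0})-1-\varepsilon$, is inconsistent with your own endpoints: at $(1,0)$ it gives $2d-1$ rather than $d-1$, and on the line $\frac1p-\frac1q=\frac1d$ it equals $1-\varepsilon$, not $0$, so the dyadic sum would diverge exactly where you need it to converge; the correct exponent is $d(\frac1p-\frac1q)-1$. Second, even granting a corrected exponent, a pointwise stationary-phase analysis of $\int|\psi_j^\pm(t)e^{-itL}f|\,dt$ is obstructed by the absolute value in $t$ exactly as you note, and converting the resulting averaging operator into sharp $L^{p}$--$L^{q}$ bounds at the scaling line would amount to reproving a restriction-type theorem; this is far more than the proposition requires. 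The remaining pieces of your argument (the $\varphi_k$, $\varphi^0$ blocks --- which the paper handles more simply by treating all of $\psi^0$ at once, since $|\sin t|\gtrsim1$ on $\supp\psi^0$ --- and the passage to restricted weak type via Lemma \ref{s-trick}(c)) are fine.
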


From  \eqref{dispersive} it follows that 
\Be \label{dispersive2} \|e^{-itL} f\|_{\infty}\lesssim  |\sin t|^{-d} \|f\|_1.\Ee
Since $d\ge 2$, combining this with \eqref{iso},  the standard argument \cite{KT} yields  the end point Strichartz estimate
\Be
\label{stri}
\| e^{-itL} f \|_{{ L^2_t([-\frac\pi 2, \frac \pi2] ;L_z^{{2d}/({d-1})}(\mathbb C^d))}}\lesssim \|f\|_2.
\Ee

\begin{proof} From \eqref{iso} and \eqref{stri}
it is clear that \eqref{strong} holds for $(1/p,1/q)=(1/2,1/2)$,  $(1/p,1/q)=\mathfrak D'$.  Thus, in view of interpolation it suffices to show the restricted-weak type estimate 
\begin{equation}\label{strong_rw}
 \Big\| \int_{-\pi/2}^{\pi/2} |e^{-it L} f| dt\Big\|_{q,\infty}\lesssim \|f\|_{p,1}
\end{equation}
with  $(1/p,1/q)=\mathfrak F'$, $((d+1)/2d, (d-1)/2d))$.

To show \eqref{strong_rw},  we recall \eqref{partitionof1} and note that  from \eqref{iso}, \eqref{stri}, and \eqref{dispersive2}  the estimate 
\[\Big\|\int| {\psi^0}(t) e^{-itL}f |dt\Big\|_{q} \lesssim \|f\|_{p}\]  
holds with $(1/p,1/q)=(1/2,1/2), \mathfrak D', (1,0)$. By interpolation we see the above estimate holds for all $p,q$ satisfying   $(1/p,1/q)\in Q$. Thus, to show \eqref{strong_rw} we only have to show
\Be 
\label{r-w}
\Big\| \int |\sum_j\psi^\pm_j e^{-itL}f| dt \Big\|_{q,\infty}\lesssim  \|f\|_{p,1}
\Ee
with $(1/p,1/q)=\mathfrak F'$, $((d+1)/2d, (d-1)/2d))$. 
We now  claim that 
\Be 
\label{psij}
\Big\| \int |\psi^\pm_j e^{-itL}f| dt \Big\|_q\lesssim 2^{(\frac dp-\frac dq-1) j} \|f\|_p\Ee
holds provided that $(1/p,1/q)$ is contained in the closed triangle with vertices $(1/2,1/2), \mathfrak D',$ and 
$(1,0)$.  Once we have this, $(c)$ in Lemma \ref{s-trick}  gives the desired estimate \eqref{r-w} with $(1/p,1/q)=\mathfrak F'$, $((d+1)/2d, (d-1)/2d))$. See Figure \ref{fig1}.

It remains to show \eqref{psij}. From \eqref{stri}  the estimate $\|\psi_j^\pm(t) e^{-itL}\|_{L^2_tL_x^{\frac{2d}{d-1}}}\lesssim \|f\|_2$  follows.  Using this estimate, by H\"older's and Minkowski's inequalities we obtain 
$\|\int |\psi_j^\pm e^{-itL}f| dt\|_{\frac{2d}{d-1}} \lesssim 2^{-\frac {1}2 j}\|f\|_2$. We also have 
$\int |\psi_j^\pm e^{-itL}f| dt \lesssim 2^{ ({d-1}) j} \|f\|_1$ because of \eqref{represent} and  $\|\int |\psi_j^\pm e^{-itL}f| dt \|_2 \lesssim 2^{ - j} \|f\|_2$  from \eqref{iso}.
 Interpolation among these estimates gives  \eqref{psij} for $(1/p,1/q)$ in the closed triangle with vertices $(1/2,1/2),$ $\mathfrak D',$ and  $(1,0)$.
\end{proof}

Once we have the estimates in Proposition  \ref{pro_strong}, the desired resolvent estimates are established by following the argument used in \cite{JLR}. For completeness, however, we give a brief proof of Theorem \ref{resol}. We refer the reader to Section 8 of \cite{JLR} for the details.

\begin{proof}[Proof of Theorem \ref{resol}] The restricted-weak type estimates can be shown in the similar manner, so we only show 
the estimate \eqref{resol}. 
  Since the adjoint operator of $(L-z)^{-1}$ is $(L-\bar z)^{-1}$ which can be handled by the same argument, we may also assume  $1/p\le 1/q'$ and furthermore $(1/p,1/q)\in Q$. In fact, we show if the estimate \eqref{strong}  holds and  $L^{-1}$ is bounded from $L^p$ to $L^q$, then 
\eqref{resol} holds.

For simplicity we only consider the case $z\in \C$ with $\Re z>d-1/2$. The other cases can be handled by the same argument  which  we use to show the estimate for the term $\mathcal E$ below.
Since $\dist (z, 2\N+d)\ge c>0$, we write $z= 2n +d-2(a+ib)$ for some $n\in\N_0$, $a,b\in\R$ satisfying $|a|<1/2$ and $|(a,b)|\ge c/2.$  
Using a smooth symmetric function $\zeta$ supported in $(-1,1)$ and satisfying $\zeta(t)=1$ on $(-1/2,1/2)$, we decompose the resolvent operator $(L-z)^{-1}$ into two part; 
\[ (L-z)^{-1}f=\mathcal I f+\mathcal E f,\] where
\begin{align}\label{reso_decom}
\mathcal I f &:= \sum_{|k-n|< n} \frac{\zeta(\frac{k-n}n)}{2(k-n+(a+ib))} \mathcal P_{2k+d}f,
\\
\mathcal E f& := \sum_{ k} \frac{1-\zeta(\frac{k-n}n)}{2(k-n+(a+ib))} \mathcal P_{2k+d}f.
\end{align}
From the choice of $\zeta$, $\mathcal I$ is written as
\[\mathcal If =\mathcal I_1f+ \mathcal I_2f+ \mathcal I_3f, \]
where 
\begin{align*}
\mathcal I_1f
&:=
 \frac1{2(a+ib)}\mathcal P_{2n+d}f 
 \\
\mathcal I_2f&:= \sum_{k=1}^n \frac{(a+ib)\zeta(k/n)}{(k+a+ib)(-k+a+ib)}\mathcal P_{2(n-k)+d} f
\\
  \mathcal I_3f&:= \sum_{k=1}^n\frac{\zeta(k/n)}{2(k+a+ib)}\Big(\mathcal P_{2(k+n)+d}f-\mathcal P_{2(n-k)+d}f\Big).
\end{align*} 
Then,  for $p,q$ satisfying $(1/p,1/q)\in{ Q}$, we obtain 
\[ \|\mathcal I_1\|_{p\to q},\,\, \|\mathcal I_2\|_{p\to q}\lesssim 1\]  
uniformly in $n$ and $a,b$ satisfying $|(a,b)|\ge c/2$. Indeed, the estimate follows from the uniform bounds for $\pj$ which are direct consequence of Theorem \ref{main} (or Proposition \ref{pro_strong} with \eqref{represent0}). Using \eqref{represent0}, we see that 
\begin{align*}
  \mathcal I_3f
 &= \sum_{k=1}^n\frac{\zeta(k/n)}{2(k+a+ib)}\int_{-\pi/2}^{\pi/2}  (e^{2i tk}-e^{-2itk} ) e^{it(2n+d)}e^{-itL}f dt\\
 &= i \int_{-\pi/2}^{\pi/2}  \sum_{k=1}^n \frac{\zeta(k/n) \sin (2kt)}{k+a+ib}  e^{it(2n+d)}e^{-itL}f dt. \end{align*}
 Note that $|\sum_{k=1}^n \frac{\zeta(k/n) \sin (2kt)}{k+a+ib}|\le C$ uniformly in $n$ and $a,b$ obeying $|(a,b)|\ge c/2$. Combining this with Proposition \ref{pro_strong}, we get $\|\mathcal I_3\|_{p\to q}\lesssim 1$ uniformly in $n$ and $a,b$.
 
 The term $\mathcal E$ is easier to deal with. Since $\mathcal E f=m_{n}(L)\circ L^{-1} f$ with 
 \[ m_n(t)= t \Big(1-\zeta \Big(\frac{t-2n-d}{2n}\Big)\Big)/(2(t-z)),\quad z= 2n +d-2(a+ib), \]
 and $|\frac {d^l}{dt^l} m_n(t)|\lesssim (1+t)^{-l}$ for  $l=0,1,2,\cdots, d+2$ whenever $t>0$, applying  the Marcinkiewicz multiplier theorem \cite[Theorem 2.4.1]{Th93} and Theorem \ref{fractional}, we obtain the desired result.  
\end{proof}

\subsection*{Acknowledgements}
This work was  supported by the POSCO Science Fellowship and a KIAS Individual Grant no. MG070502 (E. Jeong) and Grant  no. NRF-2018R1A2B2006298 (S. Lee and J. Ryu).


\begin{thebibliography}{00}
\bibitem{bak} J.-G. Bak, \emph{Sharp estimates for the Bochner--Riesz operator of negative order in $\R^2$}, Proc. Amer. Math. Soc. {\bf 125} (1997), 1977--1986.

\bibitem{B86} L. B\"orjeson, {\it Estimates for the Bochner-Riesz operator with negative index}, Indiana U. Math. J. \textbf{35} (1986), 225--233. 

\bibitem{Car99} A. Carbery, A. Seeger, S. Wainger, J. Wright, {\it Class of singular integral operators along variable lines}, J. Geom. Anal. \textbf{9} (1999),  583--605.


\bibitem{Cu17} J.-C. Cuenin, \emph{Sharp spectral estimates for the perturbed Landau Hamiltonian with $L^p$ potentials,} Integral Equations Operator Theory \textbf{88} (2017), 127--141.

\bibitem{ev01} L. Escauriaza, L. Vega, \emph{Carleman inequalities and the heat operator II}, Indiana Univ. Math. J. \textbf{50} (2001), 1149--1169. 

\bibitem{FW} C. L.  Frenzen, R. Wong,  Uniform asymptotic expansions of Laguerre polynomials. SIAM J. Math. Anal. {\bf19} (1988), 1232--1248.


\bibitem{H68} L. H\"ormander, {\it The spectral function of an elliptic operator}, Acta Math. \textbf{121} (1968), 193--218.

\bibitem{JKL16}  E. Jeong, Y. Kwon, S. Lee,  \emph{Uniform Sobolev inequalities for second order non-elliptic differential operators,} Adv. Math. {\bf302} (2016), 323--350.

\bibitem{JLR}   E. Jeong, S. Lee, J. Ryu, \emph{Estimates for the Hermite spectral projection,} arXiv:2006.11762.

\bibitem{KT} M. Keel, T. Tao, \emph{Endpoint Strichartz estimates,} Amer. J. Math. \textbf{120} (1998), 955--980.

\bibitem{KRS87} C. E. Kenig, A. Ruiz, C. D. Sogge, \emph{Uniform Sobolev inequalities and unique continuation for second order constant coefficient differential operators}, Duke Math. J. \textbf{55} (1987),  329--347.

\bibitem{KST}  C. E. Kenig,  R. J.  Stanton,  P. A. Tomas,  \emph{Divergence of eigenfunction expansions,} J. Functional Analysis {\bf 46} (1982), 1, 28--44. 

\bibitem{KR} H. Koch, F. Ricci,  
\emph{Spectral projections for the twisted Laplacian,}
Studia Math. \textbf{180} (2007), no. 2, 103--110.

\bibitem{KT09}  H. Koch, D. Tataru, \emph{Carleman estimates and unique continuation for second order parabolic equations with nonsmooth coefficients},  Comm. Partial Differential Equations \textbf{34} (2009), 305--366. 

\bibitem{Mu1}  B. Muckenhoupt, \emph{Mean convergence of Hermite and Laguerre series. I,} Trans. Amer. Math. Soc. \textbf{147} (1970),  419--431. 


\bibitem{NS} A. Nowak, K. Stempak, \emph{ Potential operators and Laplace type multipliers associated with the twisted Laplacian,} Acta Math. Sci. Ser. B (Engl. Ed.) \textbf{37} (2017), no. 1, 280--292.

\bibitem{olv}  F. W. J. Olver, \emph{Asymptotics and Special Functions},  A K Peters/CRC Press; 2nd edition (1997).

\bibitem{QW}  W.-Y. Qiu,  R. Wong, \emph{Global asymptotic expansions of the Laguerre polynomials—a Riemann–Hilbert approach}, Numer Algor  {\bf 49} (2008), 331--372. 

\bibitem{RRT97} R. K. Ratnakumar, R. Rawat, S. Thangavelu, \emph{A restriction theorem for the Heisenberg motion group,} Studia Math. \textbf{126} (1997), no. 1, 1--12.

\bibitem{Stein} E. M. Stein, \emph{Harmonic Analysis: Real Variable Methods, Orthogonality and Oscillatory Integrals}, Princeton Univ. Press, Princeton, NJ, 1993.

\bibitem{SZ}  K. Stempak, J. Zienkiewicz,  \emph{Twisted convolution and Riesz means,}
J. Anal. Math. \textbf{76} (1998), 93--107.


\bibitem{Th91}  S. Thangavelu, \emph{Weyl multipliers, Bochner-Riesz means and special Hermite expansions,}  Ark. Mat. {\bf29} (1991), 307--321.


\bibitem{Th93} \bysame, \emph{Lectures on Hermite and Laguerre expansions}, Math. notes 42, Princeton University Press 1993.


\bibitem{Th98} \bysame, {\it Hermite and special Hermite expansions revisited}, Duke Math. J. \textbf{94} (1998), 257--278.

\bibitem{Th06} \bysame, \emph{Poisson transform for the Heisenberg group and eigenfunctions of the sublaplacian,} Math. Ann. \textbf{335} (2006), no. 4, 879--899.

\end{thebibliography}
\end{document}